\theoremstyle{definition}
\newtheorem{thm}[subsection]{Theorem}
\newtheorem{prop}[subsection]{Proposition}
\newtheorem{lemma}[subsection]{Lemma}
\newtheorem{eg}[subsection]{Example}
\newtheorem{eg's}[subsection]{Examples}
\newtheorem{defn}[subsection]{Definition}
\newtheorem{rmk}[subsection]{Remark}
\newtheorem{note}[subsection]{Note}
\newtheorem{cor}[subsection]{Corollary}
\DeclareRobustCommand{\rchi}{{\mathpalette\irchi\relax}}
\newcommand{\irchi}[2]{\raisebox{\depth}{$#1\chi$}} 
\title{A note on convexity of Sections of quaternionic numerical range }
\author{P. Santhosh Kumar }
\address{P. Santhosh Kumar \\ 
	Statistics and Mathematics Unit, \\
	Indian Statistical Institute Bangalore, \\
	8th Mile, Mysore Road,
	RVCE Post
	Bangalore 560 059.}
\email{santhosh.uohmath@gmail.com}
\subjclass[2010]{15B33, 47A12, 47A30.}
\keywords{quaternionic numerical range, numerical radius, circularization, standard eigenvalues, spherical spectrum, Toeplitz-Hausdorff theorem}
\begin{document}
\maketitle
\begin{abstract}
	The quaternionic numerical range of matrices over the ring of  quaternions is not necessarily  convex. We prove Toeplitz-Hausdorff like theorem, that is, for any given quaternionic matrix every section of its quaternionic numerical range is convex. We provide some additional equivalent conditions for the quaternionic numerical range of matrices over quaternions to be convex and prove some numerical radius inequalities. 
\end{abstract}
\section{Introduction and Preliminaries}
In case of matrices over the field of complex numbers, it is  well known from the Toeplitz-Hausdroff theorem that the complex numerical range is a convex subset of $\mathbb{C}$ (see \cite{GustafsonRao} for details). Whereas, the quaternionic numerical range of matrices with quaternion entries need not be convex (see Section 2). It was J. E. Jamison \cite{Jamison}, who proposed the problem to characterize the class of linear operators on quaternionic Hilbert space with  convex numerical range. Besides this problem, several authors studied  the properties of the intersection of quaternionic numerical range  with $\mathbb{R}$ and $\mathbb{C}$  (see \cite{Au-Yeung1, Au-Yeung2, Jamison, So1, So2}).

 It was shown by So and Thompson \cite{So1, So2} that the intersection of quaternionic numerical range and the closed upper half plane is convex, but this proof is very long. In fact this intersection of a given quaternion matrix is not a complex numerical range (see \cite{Thompson} for details).  A question which is arised,  is there a  short  and conceptual proof for the result by So and Thompson? \cite[Question 1]{Zhang}. In this article, we address this question by proving Topelitz-Hausdorff like theorem. Our approach is different from the method followed in \cite{So2}. We study the geometry of the sections of quaternionic numerical range in case of $2\times 2$ quaternionic matrices, then establish the result for $n\times n \; (n \in \mathbb{N})$ quaternionic matrices by employing the technique given in \cite{GustafsonRao}. We add some equivalent conditions to the result proved by Au-Yeung \cite{Au-Yeung2}, for the quaternionic numerical range of matrices over quaternions to be convex. Also we prove some numerical range inequalities which are analogous to the classical results.

In the first section we recall some basic definitions and results which are useful for later sections. In the second  section we prove equivalent condition for the convexity of quaternionic numerical range and prove Toeplitz-Hausdorff like theorem.  In the final section we prove some inequalities related to numerical radius of quaternionic matrices.

\paragraph{\bf Quaternions}
Let $\mathbb{H}$ be the set of all elements, called quaternions, of the form $q = q_{0}+q_{1}i+q_{2}j+q_{3}k$, where $i,j,k$ are quaternion units satisfying:
\begin{equation}\label{Equation:multiplication}
i\cdot j = -j \cdot i, \; \; j\cdot k = -k \cdot j,\; \; k\cdot i = -i \cdot k \; \; \text{and}\;\; i\cdot j \cdot k = -1.
\end{equation}
Then $\mathbb{H}$ is a non-commutative division ring with the addition defined same as in  $\mathbb{C}$ and multiplication given by Equation $(\ref{Equation:multiplication})$.  For a given $q \in \mathbb{H}$, we define the real part, re$(q):= q_{0}$ and the imaginary part, im$(q):= q_{1}i+q_{2}j+q_{3}k$. The conjugate and the modulus of $q$ respectively given by 
\begin{equation*}
\overline{q}= q_{0} - ( q_{1}i+q_{2}j+q_{3}k)\; , \; |q|= \sqrt{q_{0}^{2}+q_{1}^{2}+q_{2}^{2}+q_{3}^{2}}.
\end{equation*}
The set of all  imaginary unit quaternions,  denoted by $\mathbb{S}$, is defined as
\begin{equation*}
\mathbb{S}: = \{q \in \mathbb{H}\;:\; \overline{q} = -q \; \&\;|q|=1\} = \{q\in \mathbb{H}\; :\; q^{2}=-1\}.
\end{equation*}
For each $m \in \mathbb{S}$, a slice $\mathbb{C}_{m}$ is defined as 
\begin{equation*}
\mathbb{C}_{m}: = \{\alpha+m \beta \in \mathbb{H}:\; \alpha, \beta \in \mathbb{R}\}. 
\end{equation*}
It is a real subalgebra of $\mathbb{H}$ generated by $\{1, m\}$. It is isomorphic as a field to $\mathbb{C}$, for each $m \in\mathbb{S}$. The upper half plane of $\mathbb{C}_{m}$ is, $\mathbb{C}_{m}^{+} = \{\alpha + m \beta\; |\; \alpha \in \mathbb{R}, \beta >0\} $. In fact $\mathbb{C}_{m} \cap \mathbb{C}_{n} = \mathbb{R}$, for $m \neq \pm n$ and $\mathbb{H}= \bigcup\limits_{m \in \mathbb{S}}\mathbb{C}_{m}$. For every $p, q \in \mathbb{H}$, define
\begin{equation*}
p \sim q \;\; \text {if and only if} \;\; p = s^{-1}qs, \; \text{for some}\; s \in \mathbb{H}\setminus\{0\}.
\end{equation*}
It is an equivalence relation on $\mathbb{H}$. The equivalence class of $q$, denoted by $[q]$, is given by 
\begin{equation*}
[q] = \Big\{p \in \mathbb{H}:\; \text{re}(q)=\text{re}(p), \; |\text{im}(q)|= |\text{im}(p)|\Big\}.
\end{equation*}
\begin{note} For every  $q\in \mathbb{H}$, we observe the following: 
	\begin{enumerate}
		\item   If $q = q_{0} +q_{1}i+q_{2}j+q_{3}k $ and $m \in \mathbb{S}$, then  $q \sim q_{0} \pm m \sqrt{q_{1}^{2}+q_{2}^{2}+q_{3}^{2}}$.    Moreover, 
		\begin{equation*}
		[q] \cap \mathbb{C}_{m} = \Big\{ q_{0} \pm m \sqrt{q_{1}^{2}+q_{2}^{2}+q_{3}^{2}} \Big\}, \; \text{for every}\; m \in \mathbb{S}.
		\end{equation*}
		\item There exist a unique pair  $(z_{1}, z_{2}) \in \mathbb{C}^{2}$  such that $q = z_{1}+z_{2}\cdot \; j$, where 
		\begin{equation*}
		z_{1}= \frac{1}{2}\Big[(q+\overline{q})- (qi+\overline{qi})i\Big], \;\; z_{2}= -\frac{1}{2}\Big[(qj+\overline{qj})+ (qk+\overline{qk})i\Big].
		\end{equation*} 
	\end{enumerate}
\end{note}
\begin{defn}\label{Definition:circularization}
	Let ${S}$ be a non-empty subset of  $\mathbb{C} $. Then the {\it circularization} of $S$ in $\mathbb{H}$, denoted by $\Omega_{S}$, is defined  as
	\begin{equation*}
	\Omega_{S} = \big\{\alpha + \beta m \in \mathbb{H}\; :\;  \alpha, \beta \in \mathbb{R}, \; \alpha + i \beta  \in S,\; m \in \mathbb{S} \big\}.
	\end{equation*}
	Equivalently,  $\Omega_{S} = \bigcup\limits_{z \in S} [z]$.
	
	\noindent A subset $\mathcal{K}$ of $\mathbb{H}$ is said to be {\it circular (or axially symmetric)}, if $\mathcal{K} = \Omega_{S}$ for some $S$.
\end{defn}
\begin{eg} Ring of quaternions $\mathbb{H}$ is circular since 
	$\Omega_{\mathbb{C}} = \mathbb{H}$ by $(1)$ of Note 1.1. In particular,
	\begin{equation*}
	[q] = \Omega_{\big\{\text{re}(q) + i \; | \text{im}(q)|\big\} }
	\end{equation*} 
	is circular, for every $q \in \mathbb{H}$.
\end{eg}
We recall the notion of innerproduct. Let $\mathbb{F}$ be either $\mathbb{C}$ or $\mathbb{H}$ and  $\langle \cdot , \cdot \rangle_{\mathbb{F}}$ denote the inner product on $\mathbb{F}^{n}$ (for $n \in \mathbb{N}$), defined by 

\begin{equation*}
\big\langle (p_{l})_{l=1}^{n}, (q_{l})_{l=1}^{n} \big\rangle_{\mathbb{F}}:= \sum\limits_{l=1}^{n}\overline{p}_{l} q_{l}, \; \text{for all}\; (p_{l})_{l=1}^{n},(q_{l})_{l=1}^{n}\in \mathbb{F}^{n}. 
\end{equation*}
We adopt the convention that the innerproduct is conjugate linear in the first variable and linear in the second variable.   In particular, $(\mathbb{H}^{n}, \langle \cdot, \cdot \rangle_{\mathbb{H}})$ is a right quaternionic Hilbert space (for definition see \cite{Ghiloni, Santhosh} and references therein).
\vspace{0.5cm}

\paragraph{\bf Role of complex matrices}
Let us  denote the class of all $n\times n \;(\text{for}\; n\in\mathbb{N})$ matrices over  $\mathbb{C}$  and  $\mathbb{H}$  by $M_{n}(\mathbb{C})$ and $M_{n}(\mathbb{H})$ respectively.  The conjugate of matrix  $B = [\; b_{rs}\; ]_{n\times n} \in M_{n}(\mathbb{C})$ is defined by $\overline{B} = [\; \overline{b}_{rs}\; ]_{n\times n}$. 

Let $A = [\;q_{rs}\;]_{n\times n}\in M_{n}(\mathbb{H})$. Since  $q_{rs} = a_{rs} + b_{rs}\cdot j$ for some $a_{rs}, b_{rs} \in \mathbb{C}$ by $(2)$ of Note 1.1, then $A_{1}:= [\; a_{rs}\; ]_{n\times n},\; A_{2}:= [\; b_{rs}\; ]_{n\times n}\in M_{n}(\mathbb{C})$ and  $ A=A_{1}+A_{2}\cdot j$. Define 
\begin{equation*}
\rchi_{A}:= \begin{bmatrix}
A_{1} & A_{2}\\-\overline{A}_{2}& \overline{A}_{1}
\end{bmatrix}_{2n\times 2n} \in M_{2n}(\mathbb{C}),
\end{equation*} 
then the map $\xi \colon M_{n}(\mathbb{H})\to M_{2n}(\mathbb{C})$ defined by  $
\xi(A) = \rchi_{A}, \; \text{for all}\; A \in M_{n}(\mathbb{H})$ is an injective real algebra homomorphism.
It is clear from \cite[Proposition 5.4, 5.5]{Santhosh} that  $\|A\| = \|\rchi_{A}\|$, where $\|\cdot \|$ denotes operator norm in the respective algebra.

In particular, if $q = q_{0}+q_{1}i+q_{2}j+q_{3}k \in \mathbb{H}\setminus \{0\}$, then 
\begin{equation*}
\rchi_{q}= \begin{bmatrix}
q_{0}+q_{1}i & q_{2}+q_{3}i\\
-q_{2}+q_{3}i& q_{0}-q_{1}i
\end{bmatrix}_{2\times2} \in M_{2}(\mathbb{C})
\end{equation*}
has eigenvalues $q_{0} \pm  i \sqrt{q_{1}^{2}+q_{2}^{2}+q_{3}^{2}}$ and $det(\rchi_{q}) = |q|^{2}$.

The properties of $A$ is inherited from $\rchi_{A}$ and vice versa. See the following theorem.
\begin{thm}\cite[Theorem 5.6]{Santhosh}\label{Theorem: properties}
	Let $A \in M_{n}(\mathbb{H})$. Then 
	\begin{enumerate}
		\item $\rchi_{A}^{\ast} = \rchi_{A^{\ast}}$.
		\item $\rchi_{A}$ is self-adjoint if and only if $A$ is self-adjoint.
		\item $\rchi_{A}$ is positive if and only if $A$ is positive.
		\item $\rchi_{A}$ is normal if and only if $A$ is normal.
		\item $\rchi_{A}$ is unitary if and only if $A$ is unitary.
		
	\end{enumerate}
\end{thm}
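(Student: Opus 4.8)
The plan is to prove item (1) first by a direct block computation, and then to read off (2), (4) and (5) as purely formal consequences of the facts (already recorded before the statement) that $\xi\colon A\mapsto\rchi_A$ is an \emph{injective} real algebra homomorphism, now promoted by (1) to a $\ast$-homomorphism. Item (3) will be the main obstacle, since positivity is an order condition rather than an algebraic identity. To prove (1), write $A = A_1 + A_2\cdot j$ with $A_1, A_2 \in M_n(\mathbb{C})$. Combining the rule $\overline{z_1 + z_2\cdot j} = \overline{z_1} - z_2\cdot j$ with the fact that the quaternionic adjoint is the conjugate transpose, an entrywise computation gives $A^\ast = A_1^\ast - A_2^{T}\cdot j$. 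Feeding the complex blocks $A_1^\ast$ and $-A_2^{T}$ into the definition of $\rchi$ produces
\begin{equation*}
\rchi_{A^\ast} = \begin{bmatrix} A_1^\ast & -A_2^{T} \\ \overline{A_2}^{\,T} & A_1^{T} \end{bmatrix},
\end{equation*}
and taking the conjugate transpose of the block matrix $\rchi_A$ directly yields the same matrix; hence $\rchi_A^\ast = \rchi_{A^\ast}$.

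Granting (1), the remaining algebraic items are immediate. For (2): if $A = A^\ast$ then $\rchi_A = \rchi_{A^\ast} = \rchi_A^\ast$, and conversely $\rchi_A = \rchi_A^\ast = \rchi_{A^\ast}$ forces $A = A^\ast$ by injectivity. For (4) and (5) one uses in the same way that $\rchi$ is multiplicative and that $\rchi_I = I_{2n}$: the identities $A A^\ast = A^\ast A$ and $A^\ast A = I = A A^\ast$ translate, through $\rchi_{A A^\ast} = \rchi_A\rchi_A^\ast$ and injectivity, into normality and unitarity of $\rchi_A$, and back again.

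The genuine work is (3). The forward direction is formal: if $A$ is positive then $A = B^\ast B$ for some $B \in M_n(\mathbb{H})$, so $\rchi_A = \rchi_{B^\ast B} = \rchi_B^\ast\rchi_B \ge 0$. The converse is the crux, since $\rchi_A \ge 0$ does not obviously descend through $\xi$: one may write $\rchi_A = T^\ast T$, but the factor $T$ need not lie in the image of $\xi$. I would resolve this by introducing the $\mathbb{R}$-linear bijection $\Phi\colon \mathbb{H}^n \to \mathbb{C}^{2n}$, $\Phi(u + v\cdot j) = \begin{bmatrix} u \\ -\overline{v} \end{bmatrix}$, and verifying the two compatibilities $\Phi(Ax) = \rchi_A\,\Phi(x)$ and that $\langle \Phi(x), \Phi(y)\rangle_{\mathbb{C}}$ equals the $\mathbb{C}_i$-component of the quaternion $\langle x, y\rangle_{\mathbb{H}}$. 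Granting these, for self-adjoint $A$ (equivalently self-adjoint $\rchi_A$, by (2)) the quantity $\langle Ax, x\rangle_{\mathbb{H}}$ is real and equals $\langle \rchi_A\,\Phi(x), \Phi(x)\rangle_{\mathbb{C}}$; since $\Phi$ is onto, $A \ge 0$ holds iff $\langle \rchi_A w, w\rangle_{\mathbb{C}} \ge 0$ for every $w \in \mathbb{C}^{2n}$, that is, iff $\rchi_A \ge 0$. An alternative for this converse is to invoke the spectral theorem for self-adjoint quaternionic matrices together with the fact that the eigenvalues of $\rchi_A$ are precisely the (real) standard eigenvalues of $A$ with doubled multiplicity, so that nonnegativity of the spectrum transfers directly.
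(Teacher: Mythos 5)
The paper offers no proof of this statement: it is imported by citation from \cite[Theorem 5.6]{Santhosh}, so there is nothing in-paper to compare your argument against. Judged on its own, your proof is correct. The computation for (1) checks out: $A^{\ast}=A_{1}^{\ast}-A_{2}^{T}\cdot j$, and both $\rchi_{A^{\ast}}$ and $\rchi_{A}^{\ast}$ reduce to the same block matrix with blocks $A_{1}^{\ast}$, $-A_{2}^{T}$, $\overline{A}_{2}^{\,T}$, $A_{1}^{T}$. Items (2), (4), (5) do follow purely formally from (1) together with the injectivity and multiplicativity of $\xi$ and $\rchi_{I}=I_{2n}$. For (3), your map $\Phi(u+v\cdot j)=\begin{bmatrix}u\\-\overline{v}\end{bmatrix}$ is exactly the device the paper itself deploys elsewhere (the vector $\begin{bmatrix}X_{1}\\-\overline{X}_{2}\end{bmatrix}$ in Note \ref{Note: KeyNote} and Proposition \ref{Proposition:projection}); the two compatibilities you list, $\Phi(Ax)=\rchi_{A}\Phi(x)$ and $\langle\Phi(x),\Phi(y)\rangle_{\mathbb{C}}=co\big(\langle x,y\rangle_{\mathbb{H}}\big)$, are both true, $\Phi$ is a norm-preserving bijection, and for self-adjoint $A$ the form $\langle x,Ax\rangle_{\mathbb{H}}$ is real, so positivity transfers in both directions. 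One small economy: that same $\Phi$-argument already yields the forward implication of (3), so the detour through a factorization $A=B^{\ast}B$ --- whose existence for positive quaternionic matrices itself rests on the quaternionic spectral theorem --- is unnecessary; likewise your alternative route via standard eigenvalues is valid but heavier than needed.
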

Note that $\rchi_{A}$ is invertible if and only if $A$ is invertible. The inverse of $\rchi_{A}$ is given by $(\rchi_{A})^{-1} = \rchi_{A^{-1}}$.
\vspace{0.5cm}

\paragraph{\bf Numerical range} We define the quaternionic numerical range and the sections of quaternionic numerical range  as follows:
\begin{defn}
	Let $A \in M_{n}(\mathbb{H})$. Then
	\begin{enumerate}
		\item The quaternionic numerical range of $A$, denoted by $W_{\mathbb{H}}(A)$, defined as
		\begin{equation*}
		W_{\mathbb{H}}(A):= \big\{\langle X, AX \rangle_{\mathbb{H}}: X\in S_{\mathbb{H}^{n}} \big\},
		\end{equation*}
		where $S_{\mathbb{H}^{n}}: = \big\{X \in \mathbb{H}^{n}: \|X\|=1\big\}$.
		\item The quaternionic numerical radius of $A$, denoted by ${\mathop{w}}_{\mathbb{H}}(A)$, defined as
		\begin{equation*}
		{\mathop{w}}_{\mathbb{H}}(A):= \sup\big\{|q|: q \in W_{\mathbb{H}}(A)\big\}.
		\end{equation*}
		\item For each slice $\mathbb{C}_{m}$ of $\mathbb{H} \; (m \in \mathbb{S})$, we call $W_{\mathbb{H}}(A)\cap \mathbb{C}_{m}^{+}$ as a $\mathbb{C}_{m}$- section of $W_{\mathbb{H}}(A)$.  In particular, we denote the complex section of $W_{\mathbb{H}}(A)$ by $W_{\mathbb{H}}^{+}(A)$ i.e., 
		\begin{equation*}
		W_{\mathbb{H}}^{+}(A):= W_{\mathbb{H}}(A)\cap \mathbb{C}^{+} ,
		\end{equation*}
		where $\mathbb{C}^{+}= \{\alpha + i \beta :\; \alpha \in \mathbb{R},\; \beta \geq 0\}$.
	\end{enumerate}  
\end{defn}
Note that $ {W_{\mathbb{H}}(A)} \cap \mathbb{C}_{m}^{+} \neq \emptyset $ for each $m \in \mathbb{S}$ (see Lemma \ref{Lemma:circular}).
\begin{defn}\cite{Zhang}\label{Definition:complexpart}Let $A \in M_{n}(\mathbb{H})$. Then the projection of $W_{\mathbb{H}}(A)$ onto the complex plane is denoted by $W_{\mathbb{H}}(A :\mathbb{C})$ and it is defined by
	\begin{equation*}
	W_{\mathbb{H}}(A :\mathbb{C}) = \{co(q); \; q \in W_{\mathbb{H}}(A)\},
	\end{equation*}
	where $co(q) = q_{0}+q_{1}i$, for $q = q_{0}+q_{1}i+q_{2}j+q_{3}k \in \mathbb{H}$.
\end{defn}
\vspace{0.5cm}
\paragraph{\bf Spherical spectrum} Unilike in the case of complex matrices, the left and the right eigenvalues of quaternionic matrices are to be treated  differently. For physical applications, we consider right eigenvalues of quaternionic matrices.  The spherical spectrum of a quaternionic matrix is the collection of all right eigenvalues.  
\begin{defn} \label{Definition:sphericalspectrum}
	Let $A \in M_{n}(\mathbb{H})$. Define  $\Delta_{q}(A):= A^{2}-2 \; \text{re}(q)A+ |q|^{2}I$ for $q \in \mathbb{H}$, then  the {\it spherical spectrum} of $A$, denoted by $\sigma_{S}(A)$, defined as
	\begin{equation*}
	\sigma_{S}(A) = \big\{q\in \mathbb{H}:\; N(\Delta_{q}(A)) \neq \{0\} \big\}.
	\end{equation*} 
	Here $N(\Delta_{q}(A)):= \{X \in \mathbb{H}^{n}\; :\; \Delta_{q}(A)X = 0 \}$, the null space of $\Delta_{q}(A)$.  
\end{defn}
\begin{note}\label{Note: KeyNote}The following key observations are useful to determine the spherical spectrum of any given quaternionic matrx:    \begin{enumerate}
		\item If $\Delta_{q}(A)X = 0$ for some $q \in \mathbb{H}$ and $X\in \mathbb{H}^{n} \setminus \{0\}$, then
		\begin{equation*}
		A(AX-Xq) - (AX-Xq)\overline{q} = 0.
		\end{equation*}
		Suppose that $AX-Xq=0$, then $q$ is a right eigenvalue of $A$. Otherwise, $AY = Y\overline{q}$, where $Y := AX-Xq \neq 0$ i.e., $\overline{q}$ is a right eigenvalue of $A$.
		\item If $q \in \sigma_{S}(A)$, then 
		\begin{align*}
		\Delta_{s^{-1}qs}(A) &= A^{2} - 2\; \text{re}(s^{-1}qs)A+|q|^{2}I\\
		&= A^{2}-2\; \text{re}(q)A+ |q|^{2}I\\
		&= \Delta_{q}(A).
		\end{align*}    
		This implies that $q \in \sigma_{S}(A)$ if and only if $[q] \in \sigma_{S}(A)$. Equivalently, $\sigma_{S}(A)$ is circular.
		\item If $q \in \sigma_{S}(A)$, then by above observations  $z:= \text{re}(q)+ i\;| \text{im}(q)| \in \sigma_{S}(A)$ and there exist a $X = X_{1}+X_{2}\cdot j \in \mathbb{H}^{n}\setminus \{0\}$, where $X_{1}, X_{2} \in \mathbb{C}^{n}$ such that $AX = Xz$. It implies that 
		\begin{equation*}
		\rchi_{A}\begin{bmatrix}
		X_{1}\\
		-\overline{X}_{2}
		\end{bmatrix} = \begin{bmatrix}
		X_{1}\\
		-\overline{X}_{2}
		\end{bmatrix} z
		\end{equation*}
		i.e.,  $z$ is an eigenvalue of $\rchi_{A}$ and vice versa.  Therefore, it is sufficient to know the  eigenvalues of $\rchi_{A}$ to determine right eigenvalues of $A$.  Since $\sigma_{S}(A)$ is circular, we conclude that $\sigma_{S}(A) = \Omega_{\sigma(\rchi_{A})}$, where $\sigma(\rchi_{A})$ is the set of all complex eigenvalues of  $\rchi_{A}$. 
	\end{enumerate}
	Since $\rchi_{A}$ is similar to $\overline{\rchi_{A}}$, then the non-real eigenvalues of $\rchi_{A}$ occur in conjugate pairs with the same multiplicity and the real eigenvalues occur an even number of times.  All the eigenvalues of $\rchi_{A}$ with non negative imaginary part, whcih  are called {\bf standard eigenvalues} of $A$ (see \cite{Zhang} for detials), are enough to know the spherical spectrum of $A$. That is, 
	\begin{equation}\label{Equation: Standardeigenvalues}
	\sigma_{S}(A) = \Omega_{\sigma(\rchi_{A})} = \Omega_{\sigma(\rchi_{A}) \cap \mathbb{C}^{+}}.
	\end{equation}
\end{note}
\begin{eg}
	If  $A = \begin{bmatrix}
	j & 0\\
	0 & -j
	\end{bmatrix}$, then $\rchi_{A} = \begin{bmatrix}
	0&0&1&0\\
	0&0&0&-1\\
	-1&0&0&0\\
	0&1&0&0
	\end{bmatrix}$.
	Here ${\bf \pm \it{i}}$ are the eigenvalues of $\rchi_{A}$, then $i$ is the standard eigenvalue of  $A$. Then by Equation (\ref{Equation: Standardeigenvalues}), we have $\sigma_{S}(A) = \Omega_{\{i\}} = \mathbb{S}$.
\end{eg}
\section{Convexity of numerical range sections}

In general quaternionic numerical range of matrices over the ring of quaternions  is not necessarily convex. For example,
\begin{equation*}
A = \begin{bmatrix}
k &0&0\\
0&1&0\\
0&0&1
\end{bmatrix}_{3\times 3} \in M_{3}(\mathbb{H})
\end{equation*} 
has  $k, -k \in W_{\mathbb{H}}(A)$, but $0 = \frac{k}{2} - \frac{k}{2} \notin W_{\mathbb{H}}(A)$.
To see this, assume that there  is a  $X: = \begin{bmatrix}
x&y&z
\end{bmatrix}^{T} \in S_{\mathbb{H}^{3}}$ such that 
\begin{equation*}
0 = \big\langle X , A X \big\rangle_{\mathbb{H}} = \overline{x}kx+ |y|^{2}+|z|^{2}
\end{equation*}
i.e., $|y|^{2}+|z|^{2} = -\overline{x}kx $. This is contradiction since $\overline{\overline{x}kx} = - \overline{x}kx $ and $|y|^{2}+|z|^{2}$ is real.  It shows that $W_{\mathbb{H}}(A)$ is not convex.

The next choice  is to investigate the convexity of sections of quaternionic numerical range, because for any give matrix $A \in M_{n}(\mathbb{H})$, the circularization of every section of quaternionic numerical range is $W_{\mathbb{H}}(A)$. 

Though the complex section of a quaternionic numerical range can not be realised as a complex numerical range of some complex matrix (see \cite{Thompson} for details), we prove that the complex projection of quaternionic numerical range of $A \in M_{n}(\mathbb{H})$ is same as the complex numerical range of $\rchi_{A}$. 
\begin{prop}\label{Proposition:projection}
	Let $A\in M_{n}(\mathbb{H})$. Then $W_{\mathbb{H}}(A:\mathbb{C}) = W_{\mathbb{C}}(\rchi_{A})$.
\end{prop}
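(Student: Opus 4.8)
The plan is to realize both sets as images of the unit sphere and to exhibit a single real-linear bijection between the underlying vector spaces that intertwines them. Writing $X = X_{1}+X_{2}\cdot j$ with $X_{1},X_{2}\in\mathbb{C}^{n}$ (applying $(2)$ of Note 1.1 componentwise), I set
\[
\Phi(X) = \begin{bmatrix} X_{1}\\ -\overline{X}_{2}\end{bmatrix} \in \mathbb{C}^{2n},
\]
the same column vector that appears in $(3)$ of Note 1.7. First I would record two elementary facts: $\Phi$ is an $\mathbb{R}$-linear bijection $\mathbb{H}^{n}\to\mathbb{C}^{2n}$, and it is norm preserving, since $\|\Phi(X)\|^{2} = \|X_{1}\|^{2}+\|X_{2}\|^{2} = \|X\|^{2}$ because $|x_{1}+x_{2}j|^{2} = |x_{1}|^{2}+|x_{2}|^{2}$ in each coordinate. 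Hence $\Phi$ carries $S_{\mathbb{H}^{n}}$ bijectively onto the unit sphere of $\mathbb{C}^{2n}$.

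The heart of the argument is the pointwise identity
\[
co\big(\langle X, AX\rangle_{\mathbb{H}}\big) = \big\langle \Phi(X),\, \rchi_{A}\,\Phi(X)\big\rangle_{\mathbb{C}}, \qquad X \in \mathbb{H}^{n}.
\]
To prove it I would write $A = A_{1}+A_{2}\cdot j$ and compute $AX$ using the commutation rule $j z = \overline{z}\, j$ for $z\in\mathbb{C}$ (componentwise), obtaining $AX = (A_{1}X_{1}-A_{2}\overline{X}_{2}) + (A_{1}X_{2}+A_{2}\overline{X}_{1})\cdot j$. Then, expanding $\langle X, AX\rangle_{\mathbb{H}} = \sum_{l}\overline{X^{(l)}}\,(AX)^{(l)}$ using $\overline{z_{1}+z_{2}j} = \overline{z}_{1}-z_{2}j$ and again $jz=\overline{z}j$, and extracting the $\{1,i\}$-component (the map $co$, which is additive), I obtain
\[
co\big(\langle X, AX\rangle_{\mathbb{H}}\big) = \big\langle X_{1}, A_{1}X_{1}-A_{2}\overline{X}_{2}\big\rangle_{\mathbb{C}} + \overline{\big\langle X_{2}, A_{1}X_{2}+A_{2}\overline{X}_{1}\big\rangle_{\mathbb{C}}}.
\]
On the other side I compute $\rchi_{A}\Phi(X)$ directly from the block form and evaluate $\langle \Phi(X),\rchi_{A}\Phi(X)\rangle_{\mathbb{C}}$; using the conjugation identity $\langle \overline{u}, \overline{B}\,\overline{v}\rangle_{\mathbb{C}} = \overline{\langle u, Bv\rangle_{\mathbb{C}}}$ to rewrite the lower block, I check that the two expressions coincide term by term.

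With the identity in hand the conclusion is immediate. As $X$ ranges over $S_{\mathbb{H}^{n}}$, the left-hand side ranges over $\{co(q):q\in W_{\mathbb{H}}(A)\} = W_{\mathbb{H}}(A:\mathbb{C})$, while by the bijection $\Phi$ the vector $\Phi(X)$ ranges over the entire unit sphere of $\mathbb{C}^{2n}$, so the right-hand side ranges over $W_{\mathbb{C}}(\rchi_{A})$; the identity then forces the two sets to be equal. I expect the only genuine obstacle to be bookkeeping in the central identity: the noncommutativity requires tracking the rule $jz=\overline{z}j$ in several places, and the conjugate-linearity convention together with the conjugations built into $\Phi$ (the entry $-\overline{X}_{2}$) and into $\rchi_{A}$ (the blocks $-\overline{A}_{2},\overline{A}_{1}$) must all be handled consistently. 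Once both sides are written out, the matching is a routine comparison.
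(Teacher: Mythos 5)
Your proof is correct and follows essentially the same route as the paper: the same decomposition $X = X_{1}+X_{2}\cdot j$, the same vector $\begin{bmatrix} X_{1}\\ -\overline{X}_{2}\end{bmatrix}$, and the same central identity $co(\langle X, AX\rangle_{\mathbb{H}}) = \langle \Phi(X), \rchi_{A}\Phi(X)\rangle_{\mathbb{C}}$. The only (harmless) difference is packaging: the paper verifies the two inclusions separately, producing $Y = Y_{1}-\overline{Y}_{2}\cdot j$ for the converse, whereas you observe up front that $\Phi$ is a norm-preserving real-linear bijection of unit spheres, which yields both inclusions at once.
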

\begin{proof} Given $A \in M_{n}(\mathbb{H})$ is decomposed as $A = A_{1}+A_{2}\cdot j$, where $A_{1}, A_{2} \in M_{n}(\mathbb{C})$. If $z \in W_{\mathbb{H}}(A:\mathbb{C})$, then there is a $X:= X_{1}+X_{2}\cdot j \in S_{\mathbb{H}^{n}}$, where $X_{1},X_{2}\in \mathbb{C}^{n}$ such that  
	\begin{align*}
	z & =co \big(\langle X, AX\rangle_{\mathbb{H}}\big) \\
	&= co\big(\langle X_{1}+X_{2}\cdot j, (A_{1}+A_{2}\cdot j)(X_{1}+X_{2}\cdot j)\rangle_{\mathbb{H}}\big)\\
	&= \langle X_{1}, A_{1}X_{1}-A_{2}\overline{X}_{2}\rangle_{\mathbb{H}} + \overline{j}\langle X_{2}, A_{1}X_{2}+A_{2}\overline{X}_{1} \rangle_{\mathbb{H}}\cdot j\\
	&= \langle X_{1}, A_{1}X_{1}-A_{2}\overline{X}_{2}\rangle_{\mathbb{C}} + \overline{\langle X_{2}, A_{1}X_{2}+A_{2}\overline{X}_{1}\rangle}_{\mathbb{C}}  \\
	&= \langle X_{1}, A_{1}X_{1}-A_{2}\overline{X}_{2}\rangle_{\mathbb{C}} + \overline{(-\overline{X}_{2})} (-\overline{A}_{1}\overline{X_{2}}- \overline{A}_{2}X_{1})\\
	&= \langle X_{1}, A_{1}X_{1}-A_{2}\overline{X}_{2}\rangle_{\mathbb{C}}+ \langle -\overline{X}_{2}, -\overline{A}_{1}\overline{X}_{2}- \overline{A}_{2}X_{1}\rangle_{\mathbb{C}}\\
	&= \Big\langle \begin{bmatrix}
	X_{1}\\
	-\overline{X}_{2}
	\end{bmatrix}, \begin{bmatrix}
	A_{1}& A_{2}\\
	-\overline{A}_{2} & \overline{A}_{1}
	\end{bmatrix}\begin{bmatrix}
	X_{1}\\
	-\overline{X}_{2}
	\end{bmatrix} \Big\rangle_{\mathbb{C}^{n}\oplus \mathbb{C}^{n}}.
	\end{align*}
	Since  $X_{1}+ X_{2}\cdot j \in S_{\mathbb{H}^{n}}$, we have $\begin{bmatrix}
	X_{1}\\
	-\overline{X}_{2}
	\end{bmatrix}\in S_{\mathbb{C}^{n}\oplus \mathbb{C}^{n}}$ . This implies that $z \in W_{\mathbb{C}}(\rchi_{A})$.  Conversly, assume that $ {\lambda} \in W_{\mathbb{C}}(\rchi_{A})$, then there is a $\begin{bmatrix}
	Y_{1}\\
	Y_{2}
	\end{bmatrix} \in S_{\mathbb{C}^{n}\oplus \mathbb{C}^{n}}$ such that 
	\begin{align*}
	\lambda &=  \Big\langle \begin{bmatrix}
	Y_{1}\\
	Y_{2}
	\end{bmatrix}, \begin{bmatrix}
	A_{1}& A_{2}\\
	-\overline{A}_{2} & \overline{A}_{1}
	\end{bmatrix} \begin{bmatrix}
	Y_{1}\\
	Y_{2}
	\end{bmatrix} \Big\rangle_{\mathbb{C}^{n}\oplus \mathbb{C}^{n}} \\
	&= \langle Y_{1}, A_{1}Y_{1} + A_{2}Y_{2}\rangle_{\mathbb{C}} + \langle Y_{2}, -\overline{A}_{2}Y_{1} + \overline{A}_{1}Y_{2}\rangle_{\mathbb{C}} \\
	&= co \big( \langle Y, AY\rangle_{\mathbb{H}} \big),
	\end{align*}
	where $Y: = Y_{1} -\overline{Y}_{2}\cdot j$ and  $\|Y\| = 1$. This shows that $\lambda \in W_{\mathbb{H}}(A:\mathbb{C})$. Therefore   $W_{\mathbb{H}}(A:\mathbb{C}) = W_{\mathbb{C}}(\rchi_{A})$.
\end{proof}

\begin{note}
	Since  $W_{\mathbb{C}}(\rchi_{A})$ is convex by Toeplitz-Housdroff theorem \cite[Theorem 1.1-2]{GustafsonRao}, then by Proposition \ref{Proposition:projection} , $W_{\mathbb{H}}(A:\mathbb{C})$ is convex. In particular, for a  self-adjoint matrix $A \in M_{n}(\mathbb{H})$,  
	\begin{equation*}
	W_{\mathbb{H}}(A) = W_{\mathbb{H}}(A:\mathbb{C}) = W_{\mathbb{C}}(\rchi_{A})
	\end{equation*}
	is a convex subset of $\mathbb{R}$.
\end{note}
\begin{lemma}\label{Lemma:circular}
	Let $A \in M_{n}(\mathbb{H})$. Then $W_{\mathbb{H}}(A)$ is circular (or axially symmetric).
	
\end{lemma}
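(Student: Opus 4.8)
The plan is to exhibit a set $S\subseteq\mathbb{C}$ with $W_{\mathbb{H}}(A)=\Omega_{S}$. The natural candidate is $S:=W_{\mathbb{H}}(A)\cap\mathbb{C}^{+}$, so by Definition~\ref{Definition:circularization} it suffices to prove that $W_{\mathbb{H}}(A)$ is saturated under equivalence classes, that is, $q\in W_{\mathbb{H}}(A)$ implies $[q]\subseteq W_{\mathbb{H}}(A)$. Granting this, each class meeting $W_{\mathbb{H}}(A)$ has its unique representative $\mathrm{re}(q)+i\,|\mathrm{im}(q)|$ lying in $S$ (by item $(1)$ of Note~1.1), and conversely $[z]\subseteq W_{\mathbb{H}}(A)$ for every $z\in S$; hence $W_{\mathbb{H}}(A)=\bigcup_{z\in S}[z]=\Omega_{S}$, which is exactly circularity.

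The key step is the behaviour of the numerical range under right multiplication of the argument by a unit quaternion. Recall $[q]=\{s^{-1}qs:\ s\in\mathbb{H}\setminus\{0\}\}$, and since $(\lambda s)^{-1}q(\lambda s)=s^{-1}qs$ for real $\lambda\neq0$, we may restrict to $|s|=1$, for which $s^{-1}=\overline{s}$. So fix $q=\langle X,AX\rangle_{\mathbb{H}}$ with $X\in S_{\mathbb{H}^{n}}$ and let $s\in\mathbb{H}$ with $|s|=1$. First I would check that $A(Xs)=(AX)s$: this holds because the matrix action of $A$ recombines the components of $X$ by left multiplication, which commutes with the global right scalar $s$. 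Next, using conjugate-linearity in the first slot and linearity in the second, I would compute
\begin{equation*}
\langle Xs,A(Xs)\rangle_{\mathbb{H}}=\langle Xs,(AX)s\rangle_{\mathbb{H}}=\overline{s}\,\langle X,AX\rangle_{\mathbb{H}}\,s=\overline{s}\,q\,s=s^{-1}qs,
\end{equation*}
since the scalar $\overline{s}$ factors out of the first argument on the left and $s$ out of the second on the right. Finally $\|Xs\|^{2}=\overline{s}\,\|X\|^{2}\,s=|s|^{2}\|X\|^{2}=1$ because $\|X\|^{2}$ is real, so $Xs\in S_{\mathbb{H}^{n}}$ and therefore $s^{-1}qs\in W_{\mathbb{H}}(A)$. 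Letting $s$ range over all unit quaternions yields $[q]\subseteq W_{\mathbb{H}}(A)$.

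I do not anticipate a serious obstacle; the content is essentially that the right quaternionic Hilbert space structure lets one conjugate the value of the quadratic form by any unit quaternion while remaining on the unit sphere. The only points requiring care are the identity $A(Xs)=(AX)s$ (the left-module action commuting with the right scalar) and the extraction of the quaternionic scalars from the inner product in the correct order, where noncommutativity makes the placement $\overline{s}(\cdots)s$ essential. As a byproduct, since $W_{\mathbb{H}}(A)\neq\emptyset$ and each class $[q]$ meets every slice $\mathbb{C}_{m}$ in its upper half (again by item $(1)$ of Note~1.1), this also yields the nonemptiness of every $\mathbb{C}_{m}$-section claimed earlier.
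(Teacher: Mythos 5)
Your proposal is correct and follows essentially the same route as the paper: both show $[q]\subseteq W_{\mathbb{H}}(A)$ by replacing $X$ with $X\cdot\frac{s}{|s|}$ and pulling $\overline{s}(\cdot)s$ out of the inner product, then identify $W_{\mathbb{H}}(A)$ as the circularization of its upper complex section. Your explicit verification of $A(Xs)=(AX)s$ and the normalization to $|s|=1$ are minor presentational differences only.
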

\begin{proof}
	If $q \in W_{\mathbb{H}}(A)$ then  $q = \langle X, AX\rangle_{\mathbb{H}}$ for some $X\in S_{\mathbb{H}^{n}}$. For every $ s \in \mathbb{H}\setminus \{0\}$, we have 
	\begin{equation*}
	s^{-1}qs = s^{-1} \cdot \langle X, AX\rangle_{\mathbb{H}}\cdot s = \frac{\overline{s}}{|s|}\cdot \langle X, AX\rangle_{\mathbb{H}}\cdot \frac{{s}}{|s|} = \big\langle X \cdot \frac{s}{|s|} , A (X \cdot \frac{s}{|s|}) \big\rangle_{\mathbb{H}} .
	\end{equation*} 
	Take $X_{s}:= X\cdot \frac{{s}}{|s|}$, then  $\|X_{s}\| = 1$ and $s^{-1}qs = \langle X_{s}, AX_{s}\rangle_{\mathbb{H}} \in W_{\mathbb{H}}(A) $.  That is, $[q] \in W_{\mathbb{H}}(A)$. This shows that $W_{\mathbb{H}}^{+}(A)= \{\text{re}(q) + i \;|\text{im(q)}|:\; q \in W_{\mathbb{H}}(A)\}$ is a non-empty subset of $\mathbb{C}$ and $W_{\mathbb{H}}(A) = \Omega_{W_{\mathbb{H}}^{+}(A)}$. Hence $W_{\mathbb{H}}(A)$ is circular. 
\end{proof}
By Lemma \ref{Lemma:circular}, every section of quaternionic numerical range of $A$ i.e., $W_{\mathbb{H}}(A) \cap \mathbb{C}_{m}^{+}$ is non-empty and $W_{\mathbb{H}}(A)= \Omega_{W_{\mathbb{H}}(A) \cap \mathbb{C}_{m}^{+}}$, for each $m \in \mathbb{S}$.
\begin{rmk}\label{Remark: Onlysubset}
	Let $q \in W_{\mathbb{H}}(A)$. Then by Proposition \ref{Proposition:projection} and Lemma \ref{Lemma:circular}, we have 
	\begin{equation*}
	z_{\pm}:= \text{re}(q) \pm i \;|\text{im}(q)| \in {W_{\mathbb{H}}({A})}\cap \mathbb{C} \subseteq W_{\mathbb{H}}(A:\mathbb{C}) = W_{\mathbb{C}}(\rchi_{A}).
	\end{equation*}
	This implies that $q \in [z_{\pm}] \subseteq  \Omega_{W_{\mathbb{C}(\rchi_{A})}}$. Therefore,
	\begin{equation}\label{Equation:propersubset}
	W_{\mathbb{H}}(A) \subseteq \Omega_{W_{\mathbb{C}}(\rchi_{A})}.
	\end{equation}
\end{rmk}
Note that the equality may not hold in Equation (\ref{Equation:propersubset}). See the following example. 
\begin{eg}\label{Example: NeedNot}
	Let $A = \begin{bmatrix}
	j
	\end{bmatrix}_{1\times 1}  \in  \mathbb{H}$, then
	$\rchi_{A} = \begin{bmatrix}
	0 & 1\\-1 & 0
	\end{bmatrix}_{2\times 2} \in M_{2}(\mathbb{C})$ and 
	\begin{equation*}
	\Big\langle \begin{bmatrix}
	0 & 1\\-1 & 0
	\end{bmatrix}\begin{bmatrix}
	\frac{1}{\sqrt{2}}\\
	\frac{1}{\sqrt{2}}
	\end{bmatrix},
	\begin{bmatrix}
	\frac{1}{\sqrt{2}}\\
	\frac{1}{\sqrt{2}}
	\end{bmatrix} \Big\rangle_{\mathbb{C}} = \Big\langle \begin{bmatrix}
	\frac{1}{\sqrt{2}}\\
	-\frac{1}{\sqrt{2}}
	\end{bmatrix}, \begin{bmatrix}
	\frac{1}{\sqrt{2}}\\
	\frac{1}{\sqrt{2}}
	\end{bmatrix}\Big\rangle_{\mathbb{C}} = \frac{1}{2} - \frac{1}{2} =0,
	\end{equation*}
	i.e.,  $0 \in W_{\mathbb{C}}(\rchi_{A})\subseteq  \Omega_{W_{\mathbb{C}}(\rchi_{A})}$. Now we show that $0 \notin W_{\mathbb{H}}(A)$. Assume that $0 \in W_{\mathbb{H}}(A)$, then $\overline{q}jq=0$, for some $q\in S_{\mathbb{H}}$. It follows that $0= |\overline{q}jq|= |q|^{2}$. This is contradiction to the fact that $|q|=1$. Therefore $W_{\mathbb{H}}(A)$ is a proper subset of  $\Omega_{W_{\mathbb{C}}(\rchi_{A})}$.
\end{eg}
Later we show that the equality in Equation (\ref{Equation:propersubset}) holds if and only if $W_{\mathbb{H}}(A)$ is convex (see Theorem \ref{Theorem: Main1}).
\begin{thm}\label{Theorem:Main}
	Let $A \in M_{n}(\mathbb{H})$. Then $W_{\mathbb{H}}(A)$ is compact in $\mathbb{H}$ and the numerical radius attains. Furthermore, the following properties hold true:
	\begin{enumerate}[(1)]
		\item $W_{\mathbb{H}}(\alpha I + \beta A) = \alpha + \beta W_{\mathbb{H}}(A)$, for every $\alpha, \beta \in \mathbb{R}$.
		\item If $B\in M_{n}(\mathbb{H})$, then $W_{\mathbb{H}}(A+B) \subseteq W_{\mathbb{H}}(A)+W_{\mathbb{H}}(B)$.
		\item $W_{\mathbb{H}}(U^{\ast}AU) = W_{\mathbb{H}}(A)$, for every unitary $U \in M_{n}(\mathbb{H})$.
		\item $W_{\mathbb{H}}(A^{\ast}) = W_{\mathbb{H}}(A)$.
	\end{enumerate}
\end{thm}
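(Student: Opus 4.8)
The plan is to dispatch compactness and attainment of the radius first, then verify properties (1)--(3) by direct computation with the inner product, and finally treat (4), which is the only part where the noncommutative geometry of $\mathbb{H}$ genuinely intervenes.

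For compactness I would identify $\mathbb{H}^n$ with $\mathbb{R}^{4n}$ as a real Euclidean space, so that $S_{\mathbb{H}^n}$ is the unit sphere, a closed and bounded hence compact set. The map $X \mapsto \langle X, AX\rangle_{\mathbb{H}}$ is a real polynomial map in the $4n$ real coordinates of $X$, hence continuous, so its restriction to $S_{\mathbb{H}^n}$ has compact image $W_{\mathbb{H}}(A) \subseteq \mathbb{H} \cong \mathbb{R}^4$. Since $q \mapsto |q|$ is continuous, it attains its maximum on the compact set $W_{\mathbb{H}}(A)$, and that maximum is exactly $w_{\mathbb{H}}(A)$; thus the numerical radius is attained.

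For (1)--(3) I would argue straight from the definition, fixing $X \in S_{\mathbb{H}^n}$ so that $\langle X, X\rangle_{\mathbb{H}} = \|X\|^2 = 1$. For (1), additivity together with the fact that real scalars are central in $\mathbb{H}$ gives $\langle X, (\alpha I + \beta A)X\rangle_{\mathbb{H}} = \alpha\langle X, X\rangle_{\mathbb{H}} + \beta\langle X, AX\rangle_{\mathbb{H}} = \alpha + \beta\langle X, AX\rangle_{\mathbb{H}}$, and letting $X$ range over $S_{\mathbb{H}^n}$ yields the claimed equality of sets. For (2), additivity in the second slot gives $\langle X, (A+B)X\rangle_{\mathbb{H}} = \langle X, AX\rangle_{\mathbb{H}} + \langle X, BX\rangle_{\mathbb{H}}$, and since both terms are evaluated at the same $X$, the left side lies in $W_{\mathbb{H}}(A) + W_{\mathbb{H}}(B)$, giving the inclusion. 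For (3), I would first record the adjoint identity $\langle X, CY\rangle_{\mathbb{H}} = \langle C^{\ast}X, Y\rangle_{\mathbb{H}}$ for $C \in M_n(\mathbb{H})$ (a short computation with $C^{\ast} = \overline{C}^{T}$); then $\langle X, U^{\ast}A U X\rangle_{\mathbb{H}} = \langle UX, A(UX)\rangle_{\mathbb{H}}$, and since $U$ unitary forces $\|UX\| = \|X\| = 1$ while $X \mapsto UX$ is a bijection of $S_{\mathbb{H}^n}$, the two numerical ranges coincide.

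The substance is in (4). Using the adjoint identity and the conjugate symmetry $\langle X, Y\rangle_{\mathbb{H}} = \overline{\langle Y, X\rangle_{\mathbb{H}}}$, for any $X \in S_{\mathbb{H}^n}$ I would compute $\langle X, A^{\ast}X\rangle_{\mathbb{H}} = \langle AX, X\rangle_{\mathbb{H}} = \overline{\langle X, AX\rangle_{\mathbb{H}}}$, so that $W_{\mathbb{H}}(A^{\ast}) = \{\overline{q} : q \in W_{\mathbb{H}}(A)\}$. This is only the conjugate of $W_{\mathbb{H}}(A)$, so the asserted equality is not yet formal; the key observation is that $W_{\mathbb{H}}(A)$ is invariant under conjugation. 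Indeed, for any $q$ one has $\text{re}(\overline{q}) = \text{re}(q)$ and $|\text{im}(\overline{q})| = |\text{im}(q)|$, so $\overline{q} \in [q]$. Since $W_{\mathbb{H}}(A)$ is circular by Lemma \ref{Lemma:circular}, it contains the whole class $[q]$ whenever it contains $q$; hence $q \in W_{\mathbb{H}}(A)$ forces $\overline{q} \in W_{\mathbb{H}}(A)$, giving $\{\overline{q} : q \in W_{\mathbb{H}}(A)\} = W_{\mathbb{H}}(A)$ and therefore $W_{\mathbb{H}}(A^{\ast}) = W_{\mathbb{H}}(A)$. I expect this last step --- recognizing that conjugation lands in the same equivalence class and then invoking circularity --- to be the only nonroutine point; everything else is a direct unwinding of the inner-product axioms.
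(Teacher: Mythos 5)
Your proposal is correct and follows essentially the same route as the paper: compactness via continuity of $X \mapsto \langle X, AX\rangle_{\mathbb{H}}$ on the compact sphere $S_{\mathbb{H}^{n}}$, direct inner-product computations for (1)--(3), and for (4) the identity $\langle X, A^{\ast}X\rangle_{\mathbb{H}} = \overline{\langle X, AX\rangle_{\mathbb{H}}}$ combined with the circularity of $W_{\mathbb{H}}(A)$ from Lemma \ref{Lemma:circular} to absorb the conjugation. If anything, your write-up of (4) is slightly more explicit than the paper's about why $\overline{q}\in[q]$ closes the argument.
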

\begin{proof} Define $f_{A} \colon S_{\mathbb{H}^{n}}\to \mathbb{H} $ by
	\begin{equation*}
	f_{A}(X) = \langle X, AX\rangle_{\mathbb{H}}, \text{for all}\; X \in S_{\mathbb{H}^{n}}.
	\end{equation*}
	If $\{X_{m}\} \to X_{0}$ in $S_{\mathbb{H}^{n}}$, then $f_{A}(\{X_{m}\}) = \{\langle X_{m}, AX_{m}\rangle_{\mathbb{H}}\} \to \langle X_{0}, AX_{0}\rangle_{\mathbb{H}} = f_{A}(X_{0}) $. This implies that $f_{A}$ is continuous. Since $S_{\mathbb{H}^{n}}$ is compact, then $W_{\mathbb{H}}(A)$ is compact being the range of a continuous function $f_{A}$. By the definition of numerical radius of $A$, 
	\begin{equation*}
	w_{\mathbb{H}}(A) = \max\limits_{X \in S_{\mathbb{H}^{n}}} |\langle X, AX \rangle_{\mathbb{H}}| = \|\;|f_{A}|\;\|_{\infty}.
	\end{equation*}
	By the generalization of extreme value theorem, $|f_{A}|$ attains its maximum, so  ${\mathop{w}}_{\mathbb{H}}(A)$ is attained. 
	
	\noindent Proof of $(1):$ Since  $\langle X, (\alpha I + \beta A)X\rangle_{\mathbb{H}} = \alpha + \beta \langle X, AX\rangle_{\mathbb{H}} $ for every $X \in S_{\mathbb{H}^{n}}$, then 
	\begin{equation*}
	W_{\mathbb{H}}(\alpha I + \beta A) = \alpha + \beta W_{\mathbb{H}}(A).
	\end{equation*} 
	
	\noindent Proof of $(2):$ If  $q \in W_{\mathbb{H}}(A+B)$, then 
	\begin{equation*}
	q = \langle X, (A+B)X\rangle_{\mathbb{H}} = \langle X, AX\rangle_{\mathbb{H}}+ \langle X, BX\rangle_{\mathbb{H}}
	\end{equation*} for some $X \in S_{\mathbb{H}^{n}}$. Thus  $q \in W_{\mathbb{H}}(A)+W_{\mathbb{H}}(B)$. 
	
	\noindent Proof of $(3):$ For every $X \in S_{\mathbb{H}^{n}}$, there exists a unique $Y\in S_{\mathbb{H}^{n}}$ such that $UY = X$ and 
	\begin{equation*}
	\langle X, AX\rangle_{\mathbb{H}}= \langle UY, AUY \rangle_{\mathbb{H}} = \langle Y, U^{\ast}AUY \rangle_{\mathbb{H}}.
	\end{equation*}
	This shows that $W_{\mathbb{H}}(U^{\ast}AU)= W_{\mathbb{H}}(A)$.
	
	\noindent Proof of $(4):$ Let $q \in \mathbb{H}$, then 
	\begin{align*}
	q \in W_{\mathbb{H}}(A) &\iff q =\langle X, AX\rangle, \text {for some X}\; \in S_{\mathbb{H}^{n}}\\
	&\iff \overline{q} = {\langle X, A^{\ast}X\rangle} \\
	&\iff \overline{q} \in W_{\mathbb{H}}(A^{\ast}).
	\end{align*}
	This implies that $[q] \in W_{\mathbb{H}}(A)$ if and only if $[q] \in W_{\mathbb{H}}(A^{\ast})$ since $W_{\mathbb{H}}(A)$ is circular. Therefore, $W_{\mathbb{H}}(A^{\ast}) = W_{\mathbb{H}}(A)$.
\end{proof}
Now we provide some additional equivalent conditions for the convexity of quaternionic numerical range. 
\begin{thm}\label{Theorem: Main1}
	Let $A \in M_{n}(\mathbb{H})$. Then the following are equvivalent:
	\begin{enumerate}[(1)]
		\item $W_{\mathbb{H}}(A)$ is convex.
		\item $W_{\mathbb{H}}(A:\mathbb{C}) = W_{\mathbb{H}}(A)\cap \mathbb{C}$.
		\item $W_{\mathbb{H}}(A) = \Omega_{W_{\mathbb{C}}(\rchi_{A})}$.
		\item For every $X \in S_{\mathbb{H}^{n}}$, there exists a $Y\in S_{\mathbb{H}^{n}}$ such that 
		\begin{equation*}
		2\; |\text{im} (\langle Y, AY\rangle_{\mathbb{H}})| = |\langle X, AX\rangle_{\mathbb{H}}\; i - i\; \overline{\langle X, AX\rangle}_{\mathbb{H}}|. 
		\end{equation*}
	\end{enumerate}
\end{thm}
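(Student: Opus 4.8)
The plan is to reduce every clause to a comparison of the two planar sets $D:=W_{\mathbb{C}}(\rchi_A)=W_{\mathbb{H}}(A:\mathbb{C})$ and $E:=W_{\mathbb{H}}(A)\cap\mathbb{C}$. By Proposition \ref{Proposition:projection} and the classical Toeplitz--Hausdorff theorem applied to $\rchi_A$, the set $D$ is compact and convex; moreover the anti-unitary map $\begin{bmatrix}u\\ v\end{bmatrix}\mapsto\begin{bmatrix}-\overline v\\ \overline u\end{bmatrix}$ commutes with $\rchi_A$, so $\overline\lambda\in D$ whenever $\lambda\in D$, i.e.\ $D$ is symmetric about $\mathbb{R}$. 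By Lemma \ref{Lemma:circular} the set $E$ is likewise symmetric about $\mathbb{R}$ and $W_{\mathbb{H}}(A)=\Omega_{E}$, while Remark \ref{Remark: Onlysubset} gives the inclusion $E\subseteq D$.

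The technical core will be a circularization lemma: \emph{if $S\subseteq\mathbb{C}$ is symmetric about $\mathbb{R}$, then $\Omega_S$ is convex if and only if $S$ is convex.} The ``only if'' is immediate, since $S=\Omega_S\cap\mathbb{C}$ is the intersection of a convex set with the real-linear subspace $\mathbb{C}$. For the ``if'', I would write a point of $\Omega_S$ as $a+\vec v$ with $\vec v$ a purely imaginary quaternion, so that $a+i|\vec v|\in S$, and check midpoint convexity: for $a_1+\vec v_1,\,a_2+\vec v_2\in\Omega_S$ the midpoint has imaginary modulus $\beta\le h:=\tfrac{|\vec v_1|+|\vec v_2|}{2}$; convexity of $S$ puts $\tfrac{a_1+a_2}{2}\pm i h$ in $S$, and symmetry then forces the whole vertical segment $\tfrac{a_1+a_2}{2}+it$, $|t|\le h$, into $S$, so the midpoint lies in $\Omega_S$.

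With the lemma the block $(1)\Leftrightarrow(2)\Leftrightarrow(3)$ falls out. Applied to $S=E$ it gives $(1)\Leftrightarrow E$ convex. For $(1)\Rightarrow(2)$ I would note that for every $p\in W_{\mathbb{H}}(A)$ the folded point $\text{re}(p)+i\,|\text{im}(p)|$ lies in $E$ (circularity), while $co(p)$ lies in $D$ with $|\text{im}(co(p))|\le|\text{im}(p)|$; convexity and symmetry of $E$ then place $co(p)$ on the vertical segment joining $\text{re}(p)\pm i\,|\text{im}(p)|$, whence $co(p)\in E$, so $D\subseteq E$ and thus $E=D$. The reverse $(2)\Rightarrow(1)$ is clear, since $E=D$ is convex and then $\Omega_E=W_{\mathbb{H}}(A)$ is convex by the lemma. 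Finally $(2)\Leftrightarrow(3)$ records only that $\Omega_E=\Omega_D$ iff $E=D$ (both symmetric, so the circularization remembers exactly the folded set) together with $W_{\mathbb{H}}(A)=\Omega_E$.

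For (4) the first step is the algebraic identity $q\,i-i\,\overline q=-2q_1$ for $q=q_0+q_1i+q_2j+q_3k$, which reduces the right-hand side to $2|q_1|=2\,|\text{im}(co(\langle X,AX\rangle_{\mathbb{H}}))|$; thus (4) compares the imaginary-modulus data of $D$ with that of $E$. The implication $(2)\Rightarrow(4)$ is then immediate, taking $Y$ with $\langle Y,AY\rangle_{\mathbb{H}}=co(\langle X,AX\rangle_{\mathbb{H}})\in E$. I expect the converse to be the main obstacle: one must upgrade a matching of imaginary moduli (heights) into the full equality $E=D$, which requires controlling real parts and not merely magnitudes. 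I would attempt this by fixing the real part and using the convexity and symmetry of $D$ together with the circularization lemma to show that $E$ must occupy the same vertical extent as $D$ over each real part, thereby recovering $E=D$ and closing the cycle with (2); pinning down this last passage is where I anticipate the real work to lie.
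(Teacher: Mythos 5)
Your reduction of clauses (1)--(3) to the comparison of $D=W_{\mathbb{C}}(\rchi_{A})$ with $E=W_{\mathbb{H}}(A)\cap\mathbb{C}$ is correct and in places more self-contained than the paper: the paper obtains $(1)\Leftrightarrow(2)$ by citing Au-Yeung, whereas your circularization lemma (for $S$ symmetric about $\mathbb{R}$, $\Omega_{S}$ is convex iff $S$ is) proves it directly, and your handling of $(2)\Leftrightarrow(3)$ agrees with the paper's. Two cosmetic points: write the triangle-inequality estimate in the lemma for an arbitrary weight $t$ rather than only the midpoint (the identical bound $|t\vec v_{1}+(1-t)\vec v_{2}|\le t|\vec v_{1}|+(1-t)|\vec v_{2}|$ works, and midpoint convexity alone does not suffice for general sets), and the symmetry of $D$ about $\mathbb{R}$ also follows cheaply from $co(j^{-1}qj)=\overline{co(q)}$ together with Lemma \ref{Lemma:circular}. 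The identity $qi-i\overline{q}=-2q_{1}$ and the implication $(2)\Rightarrow(4)$ are fine and match the paper's $(3)\Rightarrow(4)$.

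The genuine gap is the one you flag yourself: no implication from (4) back to (1)--(3) is proved, so the cycle of equivalences is not closed and the theorem is not established. Your diagnosis of why this is hard is exactly right: condition (4) constrains only $|\text{im}(\langle Y,AY\rangle_{\mathbb{H}})|$ and says nothing about $\text{re}(\langle Y,AY\rangle_{\mathbb{H}})$, and a matching of heights alone cannot recover $E=D$. For comparison, the paper closes $(4)\Rightarrow(2)$ by asserting that ``from the proof of $(3)\Rightarrow(4)$, the existence of $Y$ for a given $X$ guarantees that $\text{re}(\langle Y,AY\rangle_{\mathbb{H}})=\text{re}(\langle X,AX\rangle_{\mathbb{H}})$''; but (4) as stated only posits \emph{some} $Y$ with the right imaginary modulus, so this reading imports the real-part equality into the hypothesis without justification. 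Indeed, for $A=\begin{bmatrix}k&0&0\\0&1&0\\0&0&1\end{bmatrix}$, whose numerical range the paper shows is not convex, one has $W_{\mathbb{H}}^{+}(A)=\{(1-t)+it:t\in[0,1]\}$, so every value $|\text{im}(co(\langle X,AX\rangle_{\mathbb{H}}))|\in[0,1]$ is attained as $|\text{im}(\langle Y,AY\rangle_{\mathbb{H}})|$ for some $Y$ and (4) holds literally while (1) fails. So the passage you defer as ``the real work'' cannot be completed for (4) as written; it requires either strengthening (4) to demand $\text{re}(\langle Y,AY\rangle_{\mathbb{H}})=\text{re}(\langle X,AX\rangle_{\mathbb{H}})$ as well, or an argument that neither your proposal nor the paper supplies.
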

\begin{proof} It is proved in \cite[Theorem 2]{Au-Yeung1} that the quaternionic numerical range $W_{\mathbb{H}}(A)$ is convex if and only if $W_{\mathbb{H}}(A \colon \mathbb{C}) = W_{\mathbb{H}}(A)\cap \mathbb{C}$. That is $(1) \Leftrightarrow (2) $. Now we prove the rest of the equivalent conditions.

	\noindent $(2) \Rightarrow (3):$ Suppose that $W_{\mathbb{H}}(A:\mathbb{C}) = W_{\mathbb{H}}(A)\cap \mathbb{C}$. By Remark \ref{Remark: Onlysubset}, we have  $W_{\mathbb{H}}(A) \subseteq \Omega_{W_{\mathbb{C}}(A)}$. In general the reverse inclusion is not true (see Ecample \ref{Example: NeedNot} ), but by assuming $(2)$ we prove that it holds true.  If  $q \in \Omega_{W_{\mathbb{C}}(\rchi_A)}$, then $z_{\pm}: = \text{re}(q) \pm i \;|\text{im}(q)|  \sim q $ and $z_{\pm} \in W_{\mathbb{C}}(\rchi_{A})$. By Propositon \ref{Proposition:projection}, we have   
	\begin{equation*}
	z_{\pm} \in W_{\mathbb{H}}(A:\mathbb{C})=W_{\mathbb{H}}(A)\cap \mathbb{C}.
	\end{equation*} 
	Since $z_{\pm} \sim q$ and $W_{\mathbb{H}}(A)$ is circular, we have  $q \in W_{\mathbb{H}}(A)$. 
	
	\noindent $(3) \Rightarrow (4):$  Let $X \in S_{\mathbb{H}^{n}}$. Then by Proposition \ref{Proposition:projection},  $co(\langle X, AX\rangle_{\mathbb{H}}) \in W_{\mathbb{C}}(\rchi_{A})$. Since $W_{\mathbb{H}}(A) = \Omega_{W_{\mathbb{C}}(\rchi_{A})}$, there exist a $Y \in S_{\mathbb{H}^{n}}$ such that $\langle Y, AY\rangle_{\mathbb{H}} \sim co(\langle X, AX\rangle_{\mathbb{H}})$. This implies that 
	\begin{equation*}
	\text{re}(\langle X, AX\rangle_{\mathbb{H}}) = \text{re}(\langle Y, AY\rangle_{\mathbb{H}})
	\end{equation*}
	and 
	\begin{equation} \label{Equation: im1}
	|\text{im} \big(co (\langle X, AX \rangle_{\mathbb{H}})\big)| =   |\text{im} (\langle Y, AY\rangle_{\mathbb{H}})|.
	\end{equation}
	By $(2)$ of Note 1.1, we write 
	\begin{equation} \label{Equation: im2}
	| \text{im} \big(co (\langle X, AX \rangle_{\mathbb{H}})\big)| = \frac{1}{2} |\langle X, AX\rangle_{\mathbb{H}} i - i \overline{\langle X, AX\rangle}_{\mathbb{H}}|.
	\end{equation}
	From Equations (\ref{Equation: im1}), (\ref{Equation: im2}), we have
	\begin{equation*}
	2\; |\text{im} (\langle Y, AY\rangle_{\mathbb{H}})| = |\langle X, AX\rangle_{\mathbb{H}}\; i - i\; \overline{\langle X, AX\rangle}_{\mathbb{H}}|. 
	\end{equation*}
	\noindent $(4) \Rightarrow (2):$ By Definition \ref{Definition:complexpart}, it is clear that  $W_{\mathbb{H}}(A)\cap \mathbb{C} \subseteq W_{\mathbb{H}}(A\colon \mathbb{C})$. To show the reverse inclusion, let us take $ z \in W_{\mathbb{H}}(A\colon \mathbb{C})$, then  $z = co (\langle X, AX\rangle_{\mathbb{H}})$  for some $X\in S_{\mathbb{H}^{n}}$. By the assumption of $(4)$, there exists a $Y\in S_{\mathbb{H}^{n}}$  such that 
	\begin{equation*}
	|\text{im}(\langle Y, AY\rangle_{\mathbb{H}})|= \frac{1}{2}|\langle X, AX\rangle_{\mathbb{H}}\; i - i\; \overline{\langle X, AX\rangle}_{\mathbb{H}}|= |\text{im}(z)|.
	\end{equation*}
	From the proof of $(3)\Rightarrow (4)$, the existence of $Y$ for a given $X$ guarantees that $\text{re}(\langle Y, AY\rangle_{\mathbb{H}} ) = \text{re}(\langle X, AX\rangle_{\mathbb{H}})$. It implies that $z \sim \langle Y, AY\rangle_{\mathbb{H}}$. Since $W_{\mathbb{H}}(A)$ is circular, $z \in W_{\mathbb{H}}(A)\cap \mathbb{C}$. Therefore $W_{\mathbb{H}}(A:\mathbb{C}) = W_{\mathbb{H}}(A)\cap \mathbb{C}$.
\end{proof}
\begin{eg}\label{Example:main}
	If $A = \begin{bmatrix}
	k & 0\\0 & k
	\end{bmatrix}_{2\times 2} $ , then
	$
	W_{\mathbb{H}}(A) = \big\{ q \in \mathbb{H}:\; \overline{q} = -q ,\; |q|\leq 1 \big\}
	$
	is convex (see Case $(1)$ of Lemma \ref{Lemma:keylemma} for details).
\end{eg}
Now we show that the intersection of complex section of quaternionic numerical range and any line parallel to $y$ - axix is either emptyset or connected.  We recall a lemma which is useful in this context. 
\begin{lemma}\cite[Lemma 3]{Au-Yeung2} \label{Lemma: partlyconnected}
	Let $A \in M_{n}(\mathbb{H})$. Then the following assertions hold true:
	\begin{enumerate}
		\item If  $ A^{\ast} = A$, then the set $\{X\in S_{\mathbb{H}^{n}}: \langle X, AX\rangle_{\mathbb{H}} = \alpha\}$ is connected for any $\alpha \in \mathbb{R}$.
		\item If $A^{*}= -A$, then the set $\{X\in S_{\mathbb{H}^{n}}: \langle X, AX\rangle_{\mathbb{H}} = 0\}$ is connected.  
	\end{enumerate}
\end{lemma}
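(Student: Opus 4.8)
The plan is to diagonalise $A$ and reduce each assertion to an elementary connectedness statement about a quadratic level set on the sphere $S_{\mathbb{H}^{n}}$, the difference between the two cases being whether $\langle X,AX\rangle_{\mathbb{H}}$ is real or purely imaginary.

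\emph{Reduction to diagonal form.} A self-adjoint and an anti-self-adjoint matrix are both normal, so by the spectral theorem for normal quaternionic matrices (see \cite{Zhang}) there is a unitary $U\in M_{n}(\mathbb{H})$ with $U^{\ast}AU=D$ diagonal, the diagonal entries being the standard eigenvalues of $A$. Since $\langle UY,A\,UY\rangle_{\mathbb{H}}=\langle Y,U^{\ast}AU\,Y\rangle_{\mathbb{H}}=\langle Y,DY\rangle_{\mathbb{H}}$ and $U$ is an isometry, the map $X\mapsto U^{\ast}X$ is a self-homeomorphism of $S_{\mathbb{H}^{n}}$ carrying $\{X:\langle X,AX\rangle_{\mathbb{H}}=c\}$ onto $\{Y:\langle Y,DY\rangle_{\mathbb{H}}=c\}$; hence I may assume $A=D$. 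When $A^{\ast}=A$ the standard eigenvalues are real, $D=\mathrm{diag}(\lambda_{1},\dots,\lambda_{n})$ with $\lambda_{l}\in\mathbb{R}$, and $\langle Y,DY\rangle_{\mathbb{H}}=\sum_{l}\lambda_{l}|y_{l}|^{2}$; when $A^{\ast}=-A$ they are purely imaginary, $D=\mathrm{diag}(i\beta_{1},\dots,i\beta_{n})$ with $\beta_{l}\ge 0$, and $\langle Y,DY\rangle_{\mathbb{H}}=\sum_{l}\beta_{l}\,\overline{y_{l}}\,i\,y_{l}$.

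\emph{The self-adjoint case.} The level set is $\{Y\in S_{\mathbb{H}^{n}}:\sum_{l}\lambda_{l}|y_{l}|^{2}=\alpha\}$, and I prove it path-connected in two moves. First, since $S^{3}=\{\omega\in\mathbb{H}:|\omega|=1\}$ is path-connected, I rotate the phase $y_{l}/|y_{l}|$ of each nonzero coordinate to $1$ along a path in $S^{3}$; carrying out all these rotations simultaneously leaves every $|y_{l}|$, and hence both constraints, unchanged, so it produces a path inside the level set from an arbitrary $Y$ to the point $Y^{\sharp}=(|y_{1}|,\dots,|y_{n}|)$ with real nonnegative entries. Second, two real nonnegative points $(a_{l})$ and $(b_{l})$ are joined by $c_{l}(u)=\sqrt{(1-u)a_{l}^{2}+u\,b_{l}^{2}}$; convexity of $\Delta_{\alpha}=\{t:t_{l}\ge0,\ \sum_{l}t_{l}=1,\ \sum_{l}\lambda_{l}t_{l}=\alpha\}$ guarantees $(c_{l}(u)^{2})\in\Delta_{\alpha}$, so the path stays in the level set. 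Concatenating $Y\to Y^{\sharp}\to Z^{\sharp}\to Z$ gives connectedness.

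\emph{The anti-self-adjoint case and the main obstacle.} Here the level set is $N=\{Y\in S_{\mathbb{H}^{n}}:\sum_{l}\beta_{l}\,\overline{y_{l}}\,i\,y_{l}=0\}$. Writing $v_{l}=\overline{y_{l}}\,i\,y_{l}$, which is purely imaginary with $|v_{l}|=|y_{l}|^{2}=:t_{l}$, the assignment $y_{l}\mapsto v_{l}$ is a rescaled Hopf map $S^{3}\to S^{2}$, onto and with path-connected ($S^{1}$) fibres; thus $N$ fibres with connected fibres over $B=\{(t_{l},w_{l}):t_{l}\ge0,\ \sum_{l}t_{l}=1,\ w_{l}\in S^{2},\ \sum_{l}\beta_{l}t_{l}w_{l}=0\}$, and it suffices to show $B$ is connected (the lifting of paths being routine away from the loci where some $t_{l}$ vanishes). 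Projecting $B$ to the $t$-coordinates has image $T=\{t\in\Delta:2\beta_{l}t_{l}\le\sum_{m}\beta_{m}t_{m}\ \text{for all }l\}$, the polygon-closing inequalities for side lengths $\rho_{l}=\beta_{l}t_{l}$, which is a convex, hence connected, polytope; the fibre over an interior $t$ is the configuration space of closed space polygons in $\mathbb{R}^{3}\cong\operatorname{im}(\mathbb{H})$ with prescribed side lengths. The crux, and the genuinely quaternionic point, is the connectedness of these polygon fibres: three available dimensions let one swing the vectors out of any obstructing plane to deform one closed configuration into another, exactly the room that is absent in the one-dimensional situation responsible for disconnectedness in the real symmetric case. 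I would settle this by rotating all the $w_{l}$ into a common line while using the magnitude freedom inside the convex set $T$ to keep the polygon closed throughout, and then joining the resulting collinear configurations; the strata where some $t_{l}\to0$ cause no trouble since the collapsing $S^{1}$ fibre cannot disconnect $N$.
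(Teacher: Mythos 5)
First, a point of comparison: the paper does not prove this lemma at all --- it is quoted verbatim from \cite[Lemma 3]{Au-Yeung2} and used as a black box --- so there is no in-paper proof to measure you against; I can only judge your argument on its own terms.

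Your reduction to diagonal form is sound, and your treatment of the self-adjoint case is complete and correct: the phase rotation $y_{l}\mapsto |y_{l}|$ along paths in the connected group $S^{3}$ preserves both $\sum_{l}|y_{l}|^{2}$ and $\sum_{l}\lambda_{l}|y_{l}|^{2}$, and the interpolation $c_{l}(u)=\sqrt{(1-u)a_{l}^{2}+ub_{l}^{2}}$ finishes part (1). (This is exactly where the quaternionic hypothesis enters: over $\mathbb{R}$ the phase group would be $S^{0}=\{\pm1\}$, which is disconnected.)

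The anti-self-adjoint case, however, has a genuine gap. After the (correct) Hopf-map reduction, the entire content of assertion (2) is concentrated in the claim that the space of closed configurations $\{(w_{l})\in (S^{2})^{n}:\sum_{l}\rho_{l}w_{l}=0\}$ with prescribed side lengths $\rho_{l}=\beta_{l}t_{l}$ is connected --- and at precisely that point you write ``I would settle this by rotating all the $w_{l}$ into a common line while using the magnitude freedom inside $T$,'' which is a statement of intent, not an argument. The proposed move also conflates two different things: deforming within a single fibre (fixed $t$) and deforming across fibres (varying $t$ inside $T$); if you allow $t$ to vary you must still explain why the \emph{total} space over the convex base is connected, and if you fix $t$ you must actually exhibit the deformation to a collinear configuration (a bending/induction argument, or a citation to the known connectedness of polygon spaces in $\mathbb{R}^{3}$). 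Your worry about path-lifting near the strata $t_{l}=0$ is, by contrast, a red herring that can be dispatched without lifting anything: a continuous surjection between compact Hausdorff spaces is a closed map, hence a quotient map, and a quotient map with connected fibres onto a connected base has connected total space (any clopen decomposition of the total space would be saturated and would descend to disconnect the base). Applying that twice --- once to $N\to B$ and once to $B\to T$ --- reduces everything to connectedness of the individual polygon fibres, which is exactly the step you have not supplied. Until that is written out (or properly cited), part (2) is unproved.
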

\begin{cor}\label{Corollary: partlyconnected}
	Let $A\in M_{n}(\mathbb{H})$. Then $W_{\mathbb{H}}(A)\cap \mathbb{R}$ is either {\it emptyset} or {\it connected}. 
\end{cor}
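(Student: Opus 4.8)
The plan is to reduce the statement to Lemma \ref{Lemma: partlyconnected}(2) by splitting $A$ into its self-adjoint and skew-adjoint parts. Write $A = H + K$, where $H := \frac{1}{2}(A + A^{\ast})$ satisfies $H^{\ast} = H$ and $K := \frac{1}{2}(A - A^{\ast})$ satisfies $K^{\ast} = -K$. For every $X \in S_{\mathbb{H}^{n}}$ we then have $\langle X, AX\rangle_{\mathbb{H}} = \langle X, HX\rangle_{\mathbb{H}} + \langle X, KX\rangle_{\mathbb{H}}$. Using the conjugate-symmetry identity $\overline{\langle X, BX\rangle}_{\mathbb{H}} = \langle X, B^{\ast}X\rangle_{\mathbb{H}}$ (the same one used in the proof of Theorem \ref{Theorem:Main}(4)), the summand $\langle X, HX\rangle_{\mathbb{H}}$ is real, while $\langle X, KX\rangle_{\mathbb{H}}$ is purely imaginary, i.e. has vanishing real part.

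The key observation is that $\langle X, AX\rangle_{\mathbb{H}}$ lies in $\mathbb{R}$ precisely when its imaginary contribution cancels, that is, exactly when $\langle X, KX\rangle_{\mathbb{H}} = 0$; and in that case $\langle X, AX\rangle_{\mathbb{H}} = \langle X, HX\rangle_{\mathbb{H}}$. Setting $C := \{X \in S_{\mathbb{H}^{n}} : \langle X, KX\rangle_{\mathbb{H}} = 0\}$, this gives
\[
W_{\mathbb{H}}(A) \cap \mathbb{R} = \big\{ \langle X, HX\rangle_{\mathbb{H}} : X \in C \big\}.
\]
If $C = \emptyset$ then $W_{\mathbb{H}}(A) \cap \mathbb{R} = \emptyset$ and there is nothing to prove, so assume $C \neq \emptyset$.

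Since $K^{\ast} = -K$, Lemma \ref{Lemma: partlyconnected}(2) applies directly to $K$ and shows that $C$ is connected. Moreover the map $X \mapsto \langle X, HX\rangle_{\mathbb{H}}$ is continuous by the same argument that established continuity of $f_{A}$ in the proof of Theorem \ref{Theorem:Main}. As the continuous image of the connected set $C$, the set $W_{\mathbb{H}}(A) \cap \mathbb{R}$ is therefore a connected subset of $\mathbb{R}$, which completes the dichotomy.

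The only step demanding care is the reduction in the second paragraph: one must confirm that $\langle X, KX\rangle_{\mathbb{H}}$ is \emph{genuinely} purely imaginary and $\langle X, HX\rangle_{\mathbb{H}}$ \emph{genuinely} real, so that realness of $\langle X, AX\rangle_{\mathbb{H}}$ is \emph{equivalent} to $\langle X, KX\rangle_{\mathbb{H}} = 0$ and not merely implied by it. This is exactly where $H^{\ast} = H$ and $K^{\ast} = -K$ enter, through the conjugate-symmetry identity. Once this equivalence is in hand, the corollary follows from Lemma \ref{Lemma: partlyconnected}(2) and the elementary fact that continuous images of connected sets are connected.
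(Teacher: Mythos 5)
Your proof is correct and follows essentially the same route as the paper: decompose $A$ into its self-adjoint and anti-self-adjoint parts and invoke Lemma \ref{Lemma: partlyconnected}(2) for the anti-self-adjoint part. In fact your write-up is more precise than the paper's, which conflates $W_{\mathbb{H}}(A)\cap\mathbb{R}$ with the set of unit vectors $X$ satisfying $\langle X,(A-A^{\ast})X\rangle_{\mathbb{H}}=0$; your explicit final step --- realizing $W_{\mathbb{H}}(A)\cap\mathbb{R}$ as the continuous image of that connected set under $X\mapsto\langle X,HX\rangle_{\mathbb{H}}$ --- is exactly what is needed to close that gap.
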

\begin{proof}
	Given $A \in M_{n}(\mathbb{H})$ is decomposed as,
	\begin{equation*}
	A = \frac{1}{2}(A + A^{\ast} )+ \frac{1}{2}(A-A^{\ast}).
	\end{equation*}
	Here $(A+A^{\ast})$ is a self-adjoint and $(A-A^{\ast})$ is an anti self-adjoint matrix.
	Since $W_{\mathbb{H}}(A) \cap \mathbb{R}: = \{X\in S_{\mathbb{H}^{n}}:\; \langle X, (A-A^{\ast})X\rangle_{\mathbb{H}} = 0\}$, then by   $(2)$ of Lemma \ref{Lemma: partlyconnected}, $W_{\mathbb{H}}(A)\cap \mathbb{R}$ is connected.
\end{proof}
\begin{lemma} \label{Lemma: connected}
	Let $A \in M_{n}(\mathbb{H})$ and $L$ be the line parallel to $y$ - axis. Then $W_{\mathbb{H}}^{+}(A) \cap L $ is either {\it emptyset} or {\it connected}.   
\end{lemma}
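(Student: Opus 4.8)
The plan is to pull the condition ``lies on a given vertical line'' back to a level set of the self-adjoint part of $A$, and then invoke part $(1)$ of Lemma \ref{Lemma: partlyconnected}. First I would split $A = H + S$, where $H = \frac{1}{2}(A + A^{\ast})$ is self-adjoint and $S = \frac{1}{2}(A - A^{\ast})$ is anti-self-adjoint. For any $X \in S_{\mathbb{H}^{n}}$ the quantity $\langle X, HX\rangle_{\mathbb{H}}$ is real and $\langle X, SX\rangle_{\mathbb{H}}$ is purely imaginary, so that $\text{re}(\langle X, AX\rangle_{\mathbb{H}}) = \langle X, HX\rangle_{\mathbb{H}}$ and $\text{im}(\langle X, AX\rangle_{\mathbb{H}}) = \langle X, SX\rangle_{\mathbb{H}}$; in particular $|\text{im}(\langle X, AX\rangle_{\mathbb{H}})| = |\langle X, SX\rangle_{\mathbb{H}}|$.

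Next I would write the given vertical line as $L = \{c + iy : y \in \mathbb{R}\}$ for a fixed $c \in \mathbb{R}$. Using the description $W_{\mathbb{H}}^{+}(A) = \{\text{re}(q) + i\,|\text{im}(q)| : q \in W_{\mathbb{H}}(A)\}$ obtained in the proof of Lemma \ref{Lemma:circular}, a point $c + i\beta$ with $\beta \geq 0$ lies in $W_{\mathbb{H}}^{+}(A)\cap L$ precisely when there is some $X \in S_{\mathbb{H}^{n}}$ with $\langle X, HX\rangle_{\mathbb{H}} = c$ and $|\langle X, SX\rangle_{\mathbb{H}}| = \beta$. Setting $\mathcal{C}_{c} := \{X \in S_{\mathbb{H}^{n}} : \langle X, HX\rangle_{\mathbb{H}} = c\}$, this identifies
\[
W_{\mathbb{H}}^{+}(A)\cap L = \big\{ c + i\,|\langle X, SX\rangle_{\mathbb{H}}| : X \in \mathcal{C}_{c} \big\}.
\]

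If $\mathcal{C}_{c} = \emptyset$, then the intersection is empty and there is nothing to prove. Otherwise, since $H$ is self-adjoint, part $(1)$ of Lemma \ref{Lemma: partlyconnected} guarantees that $\mathcal{C}_{c}$ is connected. The map $g \colon \mathcal{C}_{c} \to [0,\infty)$ given by $g(X) = |\langle X, SX\rangle_{\mathbb{H}}|$ is continuous, being the modulus of the continuous quaternion-valued quadratic form $X \mapsto \langle X, SX\rangle_{\mathbb{H}}$ (continuity here is exactly as for $f_{A}$ in the proof of Theorem \ref{Theorem:Main}). Hence $g(\mathcal{C}_{c})$ is a connected subset of $[0,\infty)$, i.e.\ an interval, and $W_{\mathbb{H}}^{+}(A)\cap L$ is its image under the homeomorphism $\beta \mapsto c + i\beta$, so it is connected.

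The only genuine computation is the verification that $\text{re}$ and $\text{im}$ of $\langle X, AX\rangle_{\mathbb{H}}$ split along the self-adjoint / anti-self-adjoint decomposition of $A$; once that identification is in place, the whole statement collapses to ``fixing the real part is the same as intersecting with a level set of $H$,'' and connectivity follows at once. The conceptual crux I would flag is precisely this reinterpretation of the vertical-line constraint as a real-part (equivalently, self-adjoint level-set) constraint, since that is exactly the hypothesis under which Au-Yeung's lemma applies.
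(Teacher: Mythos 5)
Your proposal is correct and follows essentially the same route as the paper: both decompose $A$ into its self-adjoint and anti-self-adjoint parts, identify the vertical-line constraint with a level set of the self-adjoint part (which is connected by part $(1)$ of Lemma \ref{Lemma: partlyconnected}), and then push that connected set forward under the continuous map $X \mapsto \mathrm{re}(\langle X, AX\rangle_{\mathbb{H}}) + i\,|\mathrm{im}(\langle X, AX\rangle_{\mathbb{H}})|$. The only cosmetic difference is that the paper works with the level set $\langle X,(A+A^{\ast})X\rangle_{\mathbb{H}}=2\alpha$ rather than $\langle X, HX\rangle_{\mathbb{H}}=c$, which is the same set.
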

\begin{proof}
	let $\alpha \in \mathbb{R}$ be fixed. Then  $L_{\alpha} := \{\alpha + i \beta \;;\; \beta \in \mathbb{R}\} $ is a line  parallel to $y$ - axis. We know that  $A = \frac{1}{2}(A+A^{\ast}) + \frac{1}{2}(A-A^{\ast})$ and let
	\begin{equation*}
	U := \{X\in S_{\mathbb{H}^{n}}: \langle X, (A+A^{\ast})X\rangle_{\mathbb{H}}= 2\alpha\}.
	\end{equation*}
	Then by $(1)$ of Lemma \ref{Lemma: partlyconnected}, $U$ is connected. Define $\Phi \colon \mathbb{H}^{n} \to \mathbb{C}^{+}$ by 
	\begin{equation*}
	\Phi(Y) = \text{re}(\langle Y, AY\rangle_{\mathbb{H}}) + i \; |\text{im}(\langle Y, AY\rangle_{\mathbb{H}})|.
	\end{equation*}
	Clearly, $\Phi$ is well-defined and continuous since 
	\begin{equation*}
	|\Phi(Y)| = |\langle Y, AY\rangle_{\mathbb{H}}| \leq \|A\|\|Y\|^{2}, \; \text{for every}\; Y \in \mathbb{H}^{n}.
	\end{equation*}
	Then $\Phi(U)$ is connected. 
	Suppose that $W_{\mathbb{H}}^{+}(A) \cap L_{\alpha}$ is non-empty, then we claim that $\Phi(U) = W_{\mathbb{H}}^{+}(A) \cap L_{\alpha}$. Let $\alpha + i \beta^{\prime}\in \mathbb{C}^{+}$. Then 
	\begin{align*}
	\alpha + i \beta^{\prime} &\in W^{+}_{\mathbb{H}}(A)\cap L_{\alpha} \\ &\iff \alpha + i \beta^{\prime} = \langle Y, AY \rangle_{\mathbb{H}} \in \mathbb{C}^{+}, \; \text{for some}\; Y\in S_{\mathbb{H}^{n}}\\
	&\iff \alpha = \text{re}(\langle Y, AY \rangle_{\mathbb{H}}), \; \beta^{\prime} = |\text{im}(\langle Y, AY\rangle_{\mathbb{H}})|\\
	&\iff 2\alpha = \langle Y, AY \rangle_{\mathbb{H}} + \overline{\langle Y, AY \rangle}_{\mathbb{H}},  \; \beta^{\prime} = |\text{im}(\langle Y, AY\rangle_{\mathbb{H}})|\\
	&\iff 2\alpha = \langle Y, (A+A^{\ast})Y \rangle_{\mathbb{H}}, \; \beta^{\prime} = |\text{im}(\langle Y, AY\rangle_{\mathbb{H}})|\\
	&\iff Y \in U, \; \Phi(Y) = \alpha + i \beta^{\prime}\\
	&\iff \alpha + i \beta^{\prime} \in \Phi(U).
	\end{align*}
	This shows that $W_{\mathbb{H}}^{+}(A)\cap L_{\alpha} = \Phi(U)$ which is connected. Since $\alpha \in \mathbb{R}$ is arbitrary, we conclude that $W_{\mathbb{H}}^{+}(A) \cap L$ is either empty set or  connected for any line $L$ parallel to $y$ - axis.
\end{proof}
Note that for every $m \in \mathbb{S}$, the intersection of $\mathbb{C}_{m}$- section of quaternionic numerical range and the vertical line is either emptyset or connected. This can be seen by using the identification of $\mathbb{C}_{m}$ with $\mathbb{C}$ and by Lemma \ref{Lemma: connected}.
\begin{prop}\label{Prop:conv}
	Let $S$ be (not necessarily convex) a finite subset of $\mathbb{C}$. Then
	\begin{equation*}
	conv(\Omega_{conv(S)}) = conv(\Omega_{S}).
	\end{equation*} 
	Here $conv(\cdot)$ is an abbreviation for `{\it convex hull of }' .
\end{prop}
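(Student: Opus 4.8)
The plan is to prove the two inclusions separately, with the inclusion $conv(\Omega_S) \subseteq conv(\Omega_{conv(S)})$ being the routine one. Since $S \subseteq conv(S)$ and circularization is monotone (enlarging the base set enlarges $\Omega$), we get $\Omega_S \subseteq \Omega_{conv(S)}$, and applying $conv(\cdot)$ preserves the inclusion. For the reverse direction it suffices to establish the set-level statement $\Omega_{conv(S)} \subseteq conv(\Omega_S)$: taking convex hulls of both sides and using idempotency of $conv(\cdot)$ then gives $conv(\Omega_{conv(S)}) \subseteq conv(conv(\Omega_S)) = conv(\Omega_S)$.

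The key observation driving the hard inclusion is that $conv(\Omega_S)$ is invariant under quaternionic conjugation. For each $s \in \mathbb{H}\setminus\{0\}$ the map $C_s(q) = s^{-1}qs$ is $\mathbb{R}$-linear on $\mathbb{H} \cong \mathbb{R}^{4}$ (real scalars are central). Since $\Omega_S = \bigcup_{z\in S}[z]$ is a union of equivalence classes and each class satisfies $C_s([z]) = [z]$ by the very definition of $\sim$, we have $C_s(\Omega_S) = \Omega_S$. Because a linear map commutes with convex combinations, $C_s\big(conv(\Omega_S)\big) = conv\big(C_s(\Omega_S)\big) = conv(\Omega_S)$. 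Equivalently: whenever a point $w$ lies in $conv(\Omega_S)$, its entire class $[w] = \{s^{-1}ws : s \in \mathbb{H}\setminus\{0\}\}$ lies in $conv(\Omega_S)$ as well.

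Now I combine these facts. Viewing $S$ inside the slice $\mathbb{C} = \mathbb{C}_i \subseteq \mathbb{H}$, each $z = \alpha + i\beta \in S$ lies in $\Omega_S$ by taking $m = i$ in Definition \ref{Definition:circularization}; hence $S \subseteq \Omega_S$ and therefore $conv(S) \subseteq conv(\Omega_S)$. Take any $p \in \Omega_{conv(S)}$. By definition $p \in [w]$ for some $w \in conv(S) \subseteq conv(\Omega_S)$, and by the conjugation-invariance just established the whole class $[w]$ — in particular $p$ — lies in $conv(\Omega_S)$. Thus $\Omega_{conv(S)} \subseteq conv(\Omega_S)$, which is exactly what was needed.

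I expect the only genuinely nontrivial point to be the conjugation-invariance of $conv(\Omega_S)$ together with the realization that it lets one \emph{lift} a single point $w$ to its full sphere $[w]$; the remaining ingredients are monotonicity of $\Omega$ and elementary properties of convex hulls. I also note that finiteness of $S$ plays no role in this argument, so the same proof works verbatim for an arbitrary subset of $\mathbb{C}$; the hypothesis is present only to match the intended application.
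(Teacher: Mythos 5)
Your proof is correct, and it takes a genuinely different route from the paper's. The paper handles the hard inclusion by a direct computation inside a fixed slice: writing $p = a + n b$ with $a + ib = \sum_{\ell} \lambda_{\ell}(a_{\ell} + i b_{\ell})$ a convex combination of the points of $S$, it exhibits $p$ explicitly as $\sum_{\ell}\lambda_{\ell}(a_{\ell} + n b_{\ell})$, a convex combination of points of $\Omega_{S}$ all lying in the slice $\mathbb{C}_{n}$. You instead isolate a structural fact --- $conv(\Omega_{S})$ is invariant under every conjugation $C_{s}(q)=s^{-1}qs$, since $C_{s}$ is $\mathbb{R}$-linear and permutes $\Omega_{S}$ --- and combine it with $conv(S)\subseteq conv(\Omega_{S})$ to sweep each $w\in conv(S)$ around its entire sphere $[w]$. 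The two arguments are close cousins (your $C_{s}$ restricted to $\mathbb{C}$ is exactly the paper's substitution $i\mapsto n$), but yours buys something: the mechanism (circular symmetry of $conv(\Omega_{S})$) is made explicit and reusable, and, as you correctly observe, finiteness of $S$ becomes irrelevant, whereas the paper's proof uses it to write elements of $conv(S)$ as convex combinations over all of $S$. The only point worth flagging is that your step $C_{s}([z])=[z]$ and the subsequent identification of $[w]$ with the full conjugation orbit $\{s^{-1}ws\}$ rely on the description of the equivalence class $[q]$ as $\{p : \mathrm{re}(p)=\mathrm{re}(q),\ |\mathrm{im}(p)|=|\mathrm{im}(q)|\}$; the paper records this in its preliminaries, so the step is justified.
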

\begin{proof} It is easy to see that $conv(\Omega_{S}) \subseteq conv(\Omega_{conv(S)})$ as follows: If $q \in \Omega_{S}$, then $q = \alpha + m \; \beta $ for some $m \in \mathbb{S}$ and $\alpha + i \beta \in S \subseteq conv(S)$. Thus   $q \in \Omega_{conv(S)} \subseteq conv(\Omega_{conv(S)})$. Since $conv(\Omega_{S})$ is the smallest convex set containing $\Omega_{S}$, we have 
	\begin{equation*}
	conv(\Omega_{S}) \subseteq conv(\Omega_{conv(S)}).
	\end{equation*}   
	Now we prove the reverse inequality. Suppose $S = \{a_{1}+ib_{1}, a_{2}+ i b_{2}, \cdots,  a_{r}+ i b_{r}\}$ and let $p \in \Omega_{conv(S)}$. Then  $p = a + n\; b$ for some $n \in \mathbb{S}$ and $a+ib \in conv(S)$, where 
	\begin{equation*}
	a+ib = \sum\limits_{\ell=1}^{r} \lambda_{\ell }\; (a_{\ell} + i b_{\ell}), \;\;\; \text{where}\; \sum\limits_{\ell=1}^{r}\lambda_{\ell} = 1, \; \lambda_{\ell} \geq 0.
	\end{equation*} 
	Take $p_{\ell}: = a_{\ell } + n b_{\ell}$, then $p_{\ell} \in \Omega_{S}$, for each $\ell \in \{1,2,3,\cdots, r\}$ and
	\begin{align*}
	\sum\limits_{\ell=1}^{r} \lambda_{\ell}\;  p_{\ell} = \sum\limits_{\ell=1}^{r}\lambda_{\ell} a_{\ell} + n \sum\limits_{\ell =1}^{r} \lambda_{\ell} b_{\ell} 
	= a + n b = p.
	\end{align*}
	Therefore $p \in conv(\Omega_{S})$. This implies that $\Omega_{conv(S)} \subseteq conv(\Omega_{S})$. Hence  $conv(\Omega_{conv(S)}) \subseteq conv(\Omega_{S})$.
\end{proof}
\begin{thm}
	Let $A \in M_{n}(\mathbb{H})$ be normal. Then 
	\begin{equation*}
	conv\big(\Omega_{W_{\mathbb{H}}(A : \; \mathbb{C})}\big) = conv(\sigma_{S}(A)).
	\end{equation*}

	\begin{proof}
		Since $A$ is normal, then $\rchi_{A}$ is normal and by \cite[Theorem 1.4-4]{GustafsonRao}, we have $W_{\mathbb{C}}(\rchi_{A}) = conv(\sigma(\rchi_{A}))$. From Proposition \ref{Proposition:projection} and \ref{Prop:conv}, we see that  
		\begin{align*}
		conv(\Omega_{W_{\mathbb{H}}(A: \mathbb{C})}) &= conv(\Omega_{W_{\mathbb{C}}(\rchi_{A})}) \\
		&= conv(\Omega_{conv(\sigma(\rchi_{A}))})\\
		&= conv(\Omega_{\sigma(\rchi_{A})})\\
		&= conv(\sigma_{S}(A)) \;  \; (\text{by (3) of Note \ref{Note: KeyNote} }).\qedhere
		\end{align*}
		
	\end{proof}
\end{thm}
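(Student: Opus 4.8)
The plan is to transport the claimed identity down to the complex matrix $\rchi_A$, where the numerical range of a normal matrix is completely understood, and then to lift the outcome back to $\mathbb{H}$ via the circularization machinery. Concretely, the left-hand side is reshaped by Proposition \ref{Proposition:projection}: since $W_{\mathbb{H}}(A:\mathbb{C}) = W_{\mathbb{C}}(\rchi_A)$, we immediately obtain $conv(\Omega_{W_{\mathbb{H}}(A:\mathbb{C})}) = conv(\Omega_{W_{\mathbb{C}}(\rchi_A)})$, so that the whole problem is now about the circularization of the complex numerical range of $\rchi_A$.

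Next I would exploit normality. By item $(4)$ of Theorem \ref{Theorem: properties}, $A$ normal forces $\rchi_A$ to be normal, and for a normal complex matrix the classical identity \cite[Theorem 1.4-4]{GustafsonRao} gives $W_{\mathbb{C}}(\rchi_A) = conv(\sigma(\rchi_A))$. Substituting this into the previous line produces $conv(\Omega_{conv(\sigma(\rchi_A))})$. Because $\sigma(\rchi_A)$ is a \emph{finite} subset of $\mathbb{C}$, Proposition \ref{Prop:conv} applies verbatim with $S = \sigma(\rchi_A)$ and collapses $conv(\Omega_{conv(\sigma(\rchi_A))})$ to $conv(\Omega_{\sigma(\rchi_A)})$. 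Finally, item $(3)$ of Note \ref{Note: KeyNote} records that $\Omega_{\sigma(\rchi_A)} = \sigma_S(A)$, which closes the chain at $conv(\sigma_S(A))$ and yields the asserted equality.

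The step that carries the real weight is the interchange of convex hull and circularization supplied by Proposition \ref{Prop:conv}, and the place where one must be careful is the hypothesis of that proposition. Circularization does not commute with convex hull in general --- indeed, the circularization of a convex set is typically non-convex, which is the very phenomenon that makes $W_{\mathbb{H}}(A)$ fail to be convex in the first place --- so one cannot simply pull the convex hull through $\Omega$. What rescues the argument is that normality replaces the infinite convex set $W_{\mathbb{C}}(\rchi_A)$ by $conv(S)$ for the finite generating set $S = \sigma(\rchi_A)$, and this finiteness is exactly the regime in which Proposition \ref{Prop:conv} is valid. Thus the only genuine subtlety is to invoke the convex-hull/circularization identity on the finite spectrum rather than on the numerical range itself, after normality has been used to bring the two into agreement.
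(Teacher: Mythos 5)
Your argument is correct and follows exactly the same route as the paper: project via Proposition \ref{Proposition:projection}, invoke normality of $\rchi_A$ to write $W_{\mathbb{C}}(\rchi_A)=conv(\sigma(\rchi_A))$, apply Proposition \ref{Prop:conv} to the finite set $\sigma(\rchi_A)$, and finish with $(3)$ of Note \ref{Note: KeyNote}. Your added remark correctly identifies that the finiteness of the spectrum is what licenses the interchange of convex hull and circularization, which is indeed the crux of the argument.
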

\begin{lemma}\label{Lemma:keylemma} Let $A \in M_{2}(\mathbb{H})$. Then every section of $W_{\mathbb{H}}(A)$ is convex.   
\end{lemma}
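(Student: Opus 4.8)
The plan is to reduce the statement about all sections to a single planar statement, and then to analyse the $2\times 2$ case by an explicit normalization.

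\emph{Reduction to the complex section.} By Lemma \ref{Lemma:circular}, $W_{\mathbb{H}}(A)=\Omega_{W_{\mathbb{H}}^{+}(A)}$, so for each $m\in\mathbb{S}$ one has $W_{\mathbb{H}}(A)\cap\mathbb{C}_{m}^{+}=\{\alpha+m\beta:\alpha+i\beta\in W_{\mathbb{H}}^{+}(A)\}$. The $\mathbb{R}$-linear isometry $\mathbb{C}\to\mathbb{C}_{m}$, $i\mapsto m$, carries $W_{\mathbb{H}}^{+}(A)$ onto the $\mathbb{C}_{m}$-section and preserves convexity, so it suffices to prove that the complex section $W_{\mathbb{H}}^{+}(A)\subseteq\mathbb{C}^{+}$ is convex. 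Two structural facts will feed the argument: the projection of $W_{\mathbb{H}}^{+}(A)$ onto $\mathbb{R}$ is the real numerical range of the Hermitian part $\tfrac12(A+A^{\ast})$, hence an interval (cf.\ the analysis in Corollary \ref{Corollary: partlyconnected}), and by Lemma \ref{Lemma: connected} each vertical slice $W_{\mathbb{H}}^{+}(A)\cap L_{\alpha}$ is a single interval $[\,i\,m(\alpha),\,i\,M(\alpha)\,]$. Thus convexity of $W_{\mathbb{H}}^{+}(A)$ is equivalent to showing that the upper envelope $M(\alpha)$ is concave and the lower envelope $m(\alpha)$ is convex.

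\emph{Normalization.} Since $W_{\mathbb{H}}(U^{\ast}AU)=W_{\mathbb{H}}(A)$ by Theorem \ref{Theorem:Main}(3) and $W_{\mathbb{H}}(\alpha I+\beta A)=\alpha+\beta W_{\mathbb{H}}(A)$ by Theorem \ref{Theorem:Main}(1), I would first put $A$ in quaternionic Schur form $A=\bigl[\begin{smallmatrix}\lambda_{1}&b\\0&\lambda_{2}\end{smallmatrix}\bigr]$ with $\lambda_{k}=a_{k}+ic_{k}\in\mathbb{C}^{+}$ the standard eigenvalues; after folding to $\mathbb{C}^{+}$ a real affine change $q\mapsto\alpha+\beta q$ ($\beta\neq 0$) becomes an affine map of $\mathbb{C}^{+}$, so these operations preserve the convexity to be proved. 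A diagonal unitary conjugation by $\mathrm{diag}(e^{i\psi},e^{i\phi})$ fixes $\lambda_{1},\lambda_{2}$ and sends $b\mapsto e^{-i(\psi-\phi)}b'+e^{-i(\psi+\phi)}b''j$, where $b=b'+b''j$; choosing the two phases reduces $b$ to the form $b=r+sj$ with $r,s\ge 0$. Finally, because $\langle Xs,AXs\rangle_{\mathbb{H}}=\bar{s}\,\langle X,AX\rangle_{\mathbb{H}}\,s$ for a unit $s\in\mathbb{H}$ (as in Lemma \ref{Lemma:circular}), right multiplication of $X$ leaves the folded value unchanged, so I may take the first coordinate real and write $X=(\cos\theta,\ \sin\theta\,u)$ with $\theta\in[0,\pi/2]$ and $u\in\mathbb{H}$, $|u|=1$. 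A direct computation then gives
\begin{equation*}
\langle X, AX\rangle_{\mathbb{H}} = \cos^{2}\theta\,\lambda_{1} + a_{2}\sin^{2}\theta + \cos\theta\sin\theta\,(bu) + c_{2}\sin^{2}\theta\,(\bar{u}\,i\,u),
\end{equation*}
whose real part and whose imaginary-part norm are the two coordinates of the corresponding point of $W_{\mathbb{H}}^{+}(A)$.

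\emph{The easy cases.} When $b=0$ the matrix is normal. If in addition $\lambda_{1}=\lambda_{2}$, the set $\{\langle X,AX\rangle_{\mathbb{H}}\}$ is a translate of the imaginary ball of radius $c_{1}$ and its section is a segment; this is the case recorded in Example \ref{Example:main}. If $\lambda_{1}\neq\lambda_{2}$, then for $t=|x|^{2}$ the term $\bar u\,i\,u$ sweeps all of $\mathbb{S}$, so the fibre folds to the vertical segment with endpoints $|c_{1}t-c_{2}(1-t)|$ and $c_{1}t+c_{2}(1-t)$ over the abscissa $a_{1}t+a_{2}(1-t)$. Since $t$ is then an affine function of the abscissa, the upper envelope is affine (concave) and the lower envelope is the modulus of an affine function (convex), so the section is convex.

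\emph{The hard case and the main obstacle.} The essential difficulty is the non-normal case $r+sj=b\neq 0$. Here, with $\alpha$ fixed, the envelopes $M(\alpha)$ and $m(\alpha)$ are obtained by optimizing the Euclidean norm of the imaginary part of the displayed expression over the fibre sphere $u\in S^{3}$ subject to the real part equalling $\alpha$ — a coupled, non-convex optimization in which $bu$, $\mathrm{re}(bu)$ and $\bar u\,i\,u$ all vary simultaneously with $u$. I expect that carrying this out shows the boundary of $W_{\mathbb{H}}^{+}(A)$ to be built from an elliptical arc (the contribution of the off-diagonal/skew part, in the spirit of the elliptic range theorem) together with the fold of the portion of the range crossing the real axis, and the remaining task is to verify that this folding leaves $M(\alpha)$ concave and $m(\alpha)$ convex rather than creating a reflex corner. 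This concavity/convexity verification after folding is where the bulk of the computation lies and is the step I expect to be the main obstacle; the connectedness of vertical slices (Lemma \ref{Lemma: connected}) and the interval structure of the horizontal projection guarantee that once the two envelopes have the right curvature, convexity of the section follows.
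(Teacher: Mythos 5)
Your framework is sound as far as it goes: the reduction to the complex section via circularity, the use of Lemma \ref{Lemma: connected} to make every vertical slice an interval, the observation that the horizontal projection is the (real, hence interval) numerical range of $\tfrac12(A+A^{\ast})$, and the reformulation of convexity as concavity of the upper envelope $M(\alpha)$ and convexity of the lower envelope $m(\alpha)$ are all correct, and your treatment of the normal cases ($b=0$, with equal or distinct standard eigenvalues) reproduces what the paper obtains in its Cases (1) and (2) --- a segment, respectively the triangle $\mathrm{conv}\{z_{1},z_{2},v\}$ with $v$ the fold point on the real axis.

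However, there is a genuine gap at the crux of the lemma: the non-normal case $b\neq 0$ is not proved. You state explicitly that the concavity/convexity of the envelopes after folding ``is the step I expect to be the main obstacle'' and describe only what you \emph{expect} the boundary to look like; no argument is given that the optimization over $u\in S^{3}$ with the real part constrained actually yields envelopes of the right curvature, and nothing rules out the ``reflex corner'' you yourself worry about. Since the whole content of the $2\times 2$ lemma (and hence of the Toeplitz--Hausdorff-like theorem it feeds) lives in this case, the proposal is an outline rather than a proof. For comparison, the paper closes this case by a different device: it first computes exactly the numerical range of the nilpotent block $\bigl[\begin{smallmatrix}0&p\\0&0\end{smallmatrix}\bigr]$ (a solid ball of radius $|p|/2$, its section a half-disk), and then in the general non-normal case works with the set $\Gamma=\{\mathfrak{u}+\mathfrak{r}:\mathfrak{u}\in W^{+}_{\mathbb{H}}(\mathrm{diag}(z_{1},z_{2})),\ |\mathfrak{r}|\le |p|/2\}$, i.e.\ a Minkowski sum of the triangle with a disk, which is convex and contains $W^{+}_{\mathbb{H}}(A)$. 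If you want to complete your route you must either carry out the envelope computation in full (expect a coupled optimization in which $\mathrm{re}(\bar{x}py)$, $\mathrm{im}(\bar{x}py)$ and $\bar{u}iu$ interact), or switch to a containment-plus-filling argument in the spirit of the paper's Case (4); either way, the missing verification is not a routine step that can be waved through.
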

\begin{proof}
	By Schur canonical form \cite[Theorem 6.1]{Zhang} for matrices over quaternions, there exist a unitary matrix $U \in M_{n}(\mathbb{H})$ such that 
	\begin{equation*}
	A = U^{\ast} \begin{bmatrix}
	{z}_{1} & p \\
	0 & z_{2} 
	\end{bmatrix}U, 
	\end{equation*}
	for some $p \in \mathbb{H}$ and $z_{1}, z_{2} \in \mathbb{C}^{+}$ are the standard eigenvalues of $A$. Since every slice $\mathbb{C}_{m}$ for $m \in \mathbb{S}$ is identified with the complex field $\mathbb{C}$, it is enough to show $W^{+}_{\mathbb{H}}(A)= W_{\mathbb{H}}(A) \cap \mathbb{C}^{+}$ is a convex subset of $\mathbb{C}$.  Further by $(3)$ of Theorem \ref{Theorem:Main}, it is sufficient to show that the complex section of quaternionic numerical range of $ \begin{bmatrix}
	z_{1} & p \\
	0 & z_{2} 
	\end{bmatrix}$ is convex.

	Let $\begin{bmatrix}
	x\\ y
	\end{bmatrix} \in S_{\mathbb{H}^{2}}$. Then our proof is divided into the following cases.  
	
	\noindent Case $(1):$ If $z_{1} = z_{2}= z:= a+i b \in \mathbb{C}^{+}$ and $p = 0$, then 
	\begin{align*}
	\big\langle \begin{bmatrix}
	x \\ y 
	\end{bmatrix},  \begin{bmatrix}
	a+ib & 0\\
	0 & a+ib
	\end{bmatrix}\begin{bmatrix}
	x \\  y 
	\end{bmatrix}  \big\rangle_{\mathbb{H}}  =  a (|x|^{2}+|y|^{2})+ b\; m_{x,y},
	\end{align*}
	where $m_{x,y} : = \overline{x}ix + \overline{y}iy$.
	Clearly,  $\overline{m}_{x,y} =-m_{x,y}$ and $|m_{x,y}| \leq |x|^{2}+|y^{2}|=1$. It implies that $\big\{m_{x,y} :\; |x|^{2}+|y|^{2}=1\big\} \subseteq \{q \in \mathbb{H}: \; \overline{q} = -q,\; |q|\leq 1\}$. Let $q \in \mathbb{H}\setminus \{0\}$ be such that  $\overline{q} = - q$ and $|q| \leq 1$. Then there exist a $s_{q}\in \mathbb{H}\setminus \{0\}$ such that ${s}^{-1}_{q}\; i\; s_{q} = \frac{q}{|q|}$. Take
	\begin{equation*}
	x = \sqrt{\frac{1+|q|}{2}}\cdot \frac{s_{q}}{|s_{q}|} \; ; \; y = j\sqrt{\frac{1-|q|}{2}}\cdot \frac{s_{q}}{|s_{q}|}
	\end{equation*} 
	then $|x|^{2}+|y|^{2} =1$ and 
	\begin{align*}
	m_{x,y} = \overline{x}ix + \overline{y}iy 
	&= \Big(\frac{1+|q|}{2}\Big){s}^{-1}_{q}\; i\;s_{q}- \Big(\frac{1-|q|}{2}\Big){s}^{-1}_{q}\; i\;s_{q}\\
	&= |q|\;  \frac{q}{|q|}\\
	&= q.
	\end{align*}
	If $q = 0$, then by taking $x = \frac{1}{\sqrt{2}},\; y = j \frac{1}{\sqrt{2}}$, we see that $|x|^{2}+ |y|^{2} = 1$ and $m_{x,y} = 0$.
	This proves the reverse inclusion.  As a result we have  
	\begin{equation*}
	\big\{m_{x,y} :\; |x|^{2}+|y|^{2}=1\big\}=\big\{q \in \mathbb{H}: \; \overline{q} = -q,\; |q|\leq 1\big\}.
	\end{equation*}
	Therefore, 
	\begin{align*}
	W_{\mathbb{H}}(A)&= \{a+bq:\; \overline{q} =- q, \; 0 \leq |q|\leq 1\}.
	\end{align*} 
	It is the solid sphere in $\mathbb{R}^{4}$ with radius $b$ and center  at $(a, 0, 0, 0)$.  So $W_{\mathbb{H}}(A)$ is convex. In particular, 
	\begin{equation*}
	W^{+}_{\mathbb{H}}(A) = \big\{a + i b\beta  : 0\leq \beta\leq 1\big\}.
	\end{equation*} 
	It is the line segment joining $\text{re}(z)$ and $z$.
	
	\noindent Case $(2):$ Let $z_{1}= a_{1}+i b_{1}, z_{2}= a_{2} + i b_{2} \in \mathbb{C}^{+}$. If $z_{1} \neq z_{2}$ and $p = 0$, then  
	\begin{equation}\label{Equation: Case2}
	\big\langle \begin{bmatrix}
	x\\ y
	\end{bmatrix}, \begin{bmatrix}
	a_{1}+ib_{1} & 0\\
	0 & a_{2}+i b_{2} 
	\end{bmatrix}\begin{bmatrix}
	x\\ y
	\end{bmatrix}\big\rangle_{\mathbb{H}} 
	= |x|^{2}a_{1}+ b_{1} \overline{x}ix+ |y|^{2}a_{2} +   b_{2} \overline{y}i y.
	\end{equation}
	Suppose that the imaginary part of Equation (\ref{Equation: Case2}) is zero i.e., 
	\begin{equation}\label{Equation: imaginaryZero}
	b_{1}\overline{x}ix= - b_{2}\overline{y}iy.
	\end{equation}
	By taking modulus on both sides of Equation (\ref{Equation: imaginaryZero}), we get  $b_{1}|x|^{2}= b_{2}|y|^{2}$. Since  $|x|^{2}+|y|^{2}=1$, we have 
	\begin{equation}\label{Equation:|x||y|}
	|x| = \sqrt\frac{b_{2}}{b_{1}+ b_{2}}\; \; ; \;\; |y| = \sqrt\frac{b_{1}}{b_{1}+b_{2}}.
	\end{equation}
	Moreover, from Equation (\ref{Equation: imaginaryZero}), (\ref{Equation:|x||y|}) we have
	\begin{equation*}
	\frac{b_{1}b_{2}}{b_{1}+b_{2}}\; yx^{-1}\; i = -\frac{b_{1}b_{2}}{b_{1}+b_{2}}\; iyx^{-1}.
	\end{equation*}  
	That is, $x^{-1}\; ix+ y^{-1}iy = 0$. Conversely, if we choose $x,y$ as in Equation (\ref{Equation:|x||y|}) and satisfying  $x^{-1}\; ix+ y^{-1}iy = 0$, then $ b_{1}\overline{x}ix + b_{2} \overline{y}iy = 0$. Thus 
	\begin{equation*}
	W_{\mathbb{H}}(A) \cap \mathbb{R} = \Big\{v:=\frac{a_{1} b_{2}+ b_{1}a_{2}}{b_{1}+b_{2}} \Big\}.
	\end{equation*}  
	We show that $W^{+}_{\mathbb{H}}(A)= conv \big\{z_{1}, z_{2},  v \big\}$. For instance, if we choose $x,y \in \mathbb{C}$ with $|x|^{2}+|y|^{2}=1$, then by Equation (\ref{Equation: Case2}), we see that
	\begin{equation*}
	(a_{1}+i b_{1})|x|^{2} + (a_{2}+ i b_{2})|y|^{2} \in W_{\mathbb{H}}^{+}(A) 
	\end{equation*}
	i.e., the  line segment joining $z_{1}, z_{2} $ is in $W_{\mathbb{H}}^{+}(A)$. Now we prove that the line segments joining $v$ with $a_{1}+i b_{1}$ and $a_{2} + i b_{2}$ are in $W_{\mathbb{H}}^{+}(A)$ as follows: 
	
	Let $u_{t} := a_{1} (1-t) + vt$, $\forall \;t \in [0,1]$ and 
	\begin{equation*}
	x_{t}= \sqrt{\frac{a_{2}-u_{t}}{a_{2}-a_{1}}}\; , \; \; y_{t}= j \;\sqrt{\frac{u_{t}-a_{1}}{a_{2}-a_{1}}}.
	\end{equation*}
	Then $|x_{t}|^{2}+|y_{t}|^{2} = 1$. Moreover, 
	\begin{align*}
	\big\langle \begin{bmatrix}
	x_{t} \\ y_{t}
	\end{bmatrix},   &\begin{bmatrix}
	a_{1}+ i b_{1}& 0\\
	0 & a_{2} + i b_{2}
	\end{bmatrix} \begin{bmatrix}
	x_{t} \\ y_{t}
	\end{bmatrix}\big\rangle_{\mathbb{H}}\\&=|x_{t}|^{2}a_{1}+ |y_{t}|^{2}a_{2}+ b_{1}\overline{x_{t}}ix_{t} + b_{2}\overline{y_{t}}iy_{t}\; \\
	&= u_{t} +\frac{(a_{1}b_{2}+a_{2}b_{1})}{a_{2}-a_{1}} i - \frac{(b_{1}+b_{2})}{a_{2}-a_{1}}iu_{t} \\
	&=u_{t} +\frac{(a_{1}b_{2}+a_{2}b_{1})}{a_{2}-a_{1}} i -\frac{(b_{1}+b_{2})}{a_{2}-a_{1}}i vt -\frac{(b_{1}+b_{2})}{a_{2}-a_{1}}i a_{1}(1-t)\\
	&= u_{t} + \Big[ \frac{a_{1}b_{2}+a_{2}b_{1}-a_{1}b_{1}-a_{1}b_{2}}{a_{2}-a_{1}}\Big] i(1-t) \\
	&= u_{t} + b_{1}i(1-t) \\
	&= (a_{1}+ib_{1})(1-t)+vt.
	\end{align*}
	Similarly, we see that the segment joining $a_{2}+i b_{2}$ and $v$ is in $W_{\mathbb{H}}^{+}(A)$. It is clear from Lemma \ref{Lemma: connected} that  $conv\{z_{1}, z_{2}, v\} \subseteq  W_{\mathbb{H}}^{+}(A)$. Suppose that $z \in \mathbb{C}^{+}$ and $z \notin conv\{z_{1},z_{2},v\}$, then $z\notin conv(\Omega_{conv\{z_{1},z_{2},v\}}) = conv(\Omega_{\{z_{1}, z_{2}, v\}}) \supseteq W^{+}_{\mathbb{H}}(A) $. Thus $z \notin W_{\mathbb{H}}^{+}(A)$. It shows that $conv\{z_{1}, z_{2}, v\} =  W_{\mathbb{H}}^{+}(A)$.
	
	\noindent Case $(3):$ If $z_{1} = z_{2} = 0$, then by the Young's inequality, we have  
	\begin{equation*}
	\big|\big\langle \begin{bmatrix}
	x\\ y
	\end{bmatrix}, \begin{bmatrix}
	py\\ 0
	\end{bmatrix}\big\rangle_{\mathbb{H}}\big| = |\overline{x}py|\leq |p||x||y|\leq |p|\big(\frac{|x|^{2}+|y|^{2}}{2}\big) = \frac{|p|}{2}. 
	\end{equation*}
	Thus 
	\begin{equation*}
	W_{\mathbb{H}}(A)\subseteq \Big\{q\in \mathbb{H}: |q|\leq \frac{|p|}{2}\Big\}. 
	\end{equation*}
	Suppose that $|p|= 1 $ and $q \in \mathbb{H}$ with $|q| \leq \frac{1}{2}$, then $q = r e^{\mathfrak{m}_{q}\theta },\; 0 \leq r \leq \frac{1}{2}$ where $\mathfrak{m}_{q} = \frac{\text{im}(q)}{|\text{im}(q)|}$. If $x = e^{-\mathfrak{m}_{q}\theta}{\text cos}\;\alpha $ and $y = p^{-1}{\text sin}\; \alpha$ such that ${\text sin}\;2\alpha = 2r \leq 1$ and $0 \leq \alpha \leq \frac{\pi}{4}$, then $|x|^{2}+|y|^{2}= {\text cos}^{2}\alpha + {\text sin}^{2}\alpha  = 1$ and 
	\begin{equation*}
	\overline{x}p y = e^{\mathfrak{m}_{q}\theta} {\text sin } \alpha\;  {\text cos } \alpha = re^{\mathfrak{m}_{q}\theta} = q . 
	\end{equation*} 
	It shows that $W_{\mathbb{H}}(A)= \Big\{q\in \mathbb{H}: |q|\leq \frac{1}{2}\Big\}$. If $|p| \neq 1$, then by Theorem \ref{Theorem:Main}, we have
	\begin{equation*}
	W_{\mathbb{H}}(A) = W_{\mathbb{H}}\Big(\begin{bmatrix}
	0 & \frac{p}{|p|}\\
	0 &0
	\end{bmatrix}\Big) |p|= \Big\{q\in \mathbb{H}: |q|\leq \frac{|p|}{2}\Big\}.
	\end{equation*}  
	Thus $
	W^{+}_{\mathbb{H}}(A) = \Big\{z\in \mathbb{C}^{+}: |z|\leq \frac{|p|}{2}\Big\}$.
	It is the upper half of the disk of radius $\frac{|p|}{2}$ in the complex plane.  
	
	\noindent Case $(4):$ Let $z_{1}= a_{1}+i b_{1}, z_{2}= a_{2} + i b_{2} \in \mathbb{C}^{+}$. If $z_{1} \neq z_{2}$ and $p \neq 0$, then 
	\begin{align*}\label{Equation:case4}
	\big\langle \begin{bmatrix}
	x\\y
	\end{bmatrix}, &\begin{bmatrix}
	a_{1}+ib_{1} & p \\
	0 & a_{2}+ib_{2}
	\end{bmatrix}\begin{bmatrix}
	x\\y
	\end{bmatrix}\big\rangle_{\mathbb{H}} \\
	&= a_{1}|x|^{2}+ a_{2}|y|^{2}+b_{1}\overline{x}ix+ \overline{x}py+ b_{2}\overline{y}iy.
	\end{align*}
	The imaginary part of the above innerproduct is given by 
	\begin{align*}
	im \Big(\big\langle \begin{bmatrix}
	x\\y
	\end{bmatrix}, \begin{bmatrix}
	a_{1}+ib_{1} & p \\
	0 & a_{2}+ib_{2}
	\end{bmatrix}&\begin{bmatrix}
	x\\y
	\end{bmatrix}\big\rangle_{\mathbb{H}}\Big) \\
	&= b_{1}\overline{x}ix+ b_{2}\overline{y}iy+ \frac{1}{2}\big( \overline{x}py-\overline{y}\;\overline{p}x\big)\\
	&= \big\langle \begin{bmatrix}
	x\\y
	\end{bmatrix}, \begin{bmatrix}
	ib_{1}& \frac{p}{2}\\
	\frac{-\overline{p}}{2}&ib_{2}
	\end{bmatrix}\begin{bmatrix}
	x\\y
	\end{bmatrix}\big\rangle_{\mathbb{H}}.
	\end{align*}
	Let $B:= \begin{bmatrix}
	ib_{1}& \frac{p}{2}\\
	\frac{-\overline{p}}{2}&ib_{2}
	\end{bmatrix} $. Then $B$ is anti self-adjoint and 
	\begin{equation*}
	W_{\mathbb{H}}(A) \cap \mathbb{R} = \Big\{\big\langle \begin{bmatrix}
	x\\y
	\end{bmatrix}, A\begin{bmatrix}
	x\\y
	\end{bmatrix}\big\rangle_{\mathbb{H}}:\; \;\begin{bmatrix}
	x\\y
	\end{bmatrix}\in S_{\mathbb{H}^{2}}\; \&\; \big\langle \begin{bmatrix}
	x\\y
	\end{bmatrix}, B\begin{bmatrix}
	x\\y
	\end{bmatrix}\big\rangle_{\mathbb{H}} = 0\Big\}.
	\end{equation*}
	If $p = p_{0}+p_{1}i+p_{2}j+p_{3}k$, then the complex matrix $\rchi_{B}$ associated to $B$ is given by  
	\begin{equation*}
	\rchi_{B} = \begin{bmatrix}
	ib_{1} & \frac{1}{2}(p_{0}+p_{1}i)& 0 & \frac{1}{2}(p_{2}+p_{3}i)\\
	\frac{1}{2}(-p_{0}+p_{1}i) & i b_{2}& \frac{1}{2}(p_{2}+p_{3}i)&0\\
	0 & \frac{1}{2}(- p_{2}+p_{3}i)&- ib_{1}&\frac{1}{2}(p_{0}-p_{1}i)\\
	\frac{1}{2}(- p_{2}+p_{3}i) & 0 & \frac{1}{2}(-p_{0}-p_{1}i)& - ib_{2}
	\end{bmatrix}_{4 \times 4.}
	\end{equation*} 
	The determinant of $\rchi_{B}$ is computed as,
	\begin{equation*}
	det(\rchi_{B})= (b_{1} b_{2})^{2}+ \frac{1}{2}b_{1} b_{2} |p|^{2}+ \frac{1}{16}|p|^{4} > 0,
	\end{equation*} 
	since $|p|\neq 0$. It implies that $\rchi_{B}$ is invertible.  Equivalently, $B$ is invertible. Moreover, by Corollary \ref{Corollary: partlyconnected}, we have $W_{\mathbb{H}}(A) \cap \mathbb{R}$ is an interval in $\mathbb{R}$. 
	
	Now we claim that $W_{\mathbb{H}}^{+}(A)$ is convex:
	
	Let $\Gamma : = \Big\{\mathfrak{u} + \mathfrak{r}: \mathfrak{u}\in W_{\mathbb{H}}^{+}(\begin{bmatrix}
	z_{1}& 0\\
	0 & z_{2}
	\end{bmatrix}), \mathfrak{r}\in \mathbb{C}, \; |\mathfrak{r}| \leq \frac{|p|}{2}\Big\}$. Then  $W_{\mathbb{H}}^{+}(A)$ is a closed subset of $ \Gamma$ .  So it is enough to show that  $\Gamma$ is convex. If $\mathfrak{u}_{1}+\mathfrak{r}_{1}, \;\mathfrak{u}_{2}+\mathfrak{r}_{2} \in \Gamma$, then by Case (3), we have  $\alpha \mathfrak{u}_{1} + (1-\alpha)\mathfrak{u}_{2}\in W_{\mathbb{H}}^{+}(\begin{bmatrix}
	z_{1}& 0\\
	0 & z_{2}
	\end{bmatrix})$ and $|\alpha \mathfrak{r}_{1}+ (1-\alpha)\mathfrak{r}_{2}|\leq \alpha \frac{|p|}{2} + (1-\alpha)\frac{|p|}{2}= \frac{|p|}{2}$ for $\alpha \in [0,1]$. It implies that $\alpha (\mathfrak{u}_{1}+\mathfrak{r}_{1})+ (1-\alpha) (\mathfrak{u}_{2}+\mathfrak{r}_{2}) \in \Gamma$.   Hence $W_{\mathbb{H}}^{+}(A)$ is convex.  
	\paragraph{\bf Figures} The complex section of the quaternionic numerical range $W_{\mathbb{H}}^{+}(A)$ in first three  cases of Lemma \ref{Lemma:keylemma} is drawn as follows:
	
	\begin{center}
		
		\begin{tikzpicture}{case}
		\draw[->, thick] (-1,0) -- (3.5,0);
		\draw[->, thick] (0,-1) -- (0,3);
		\draw[blue] (2,0)    -- (2,2);
		\draw[fill] (2,0) circle [radius=0.025];
		\draw[fill] (2,2) circle [radius=0.025];
		\node [below] at (2,0) {$\text{re}(z)$};
		\node [above] at (2,2) {$z$};
		\node at (1,-1.75) {Case (1). };
		\end{tikzpicture}
		\qquad 
		\hspace{1cm}
		\begin{tikzpicture}
		\coordinate (r0) at (1,0);
		\coordinate (s0) at (1.5,2);
		\coordinate (si) at (-1, 1.5);
		\filldraw[draw=blue, fill= blue!30] (r0) -- (s0) -- (si) -- (r0) -- cycle;
		\draw[->, thick] (-2,0) -- (3,0);
		\draw[->, thick] (0,-1) -- (0,3);
		\draw[fill] (1,0) circle [radius= 0.025];
		\draw[fill] (1.5,2) circle [radius=0.025];
		\draw[fill] (-1,1.5) circle [radius=0.025];
		\node[below] at (1,0) {\it{v}};
		\node[right] at (1.5, 2) {$z_{2}$};
		\node[left] at (-1, 1.5) {$z_{1}$};
		\node at (0.5,-1.75) {Case (2). };
		\end{tikzpicture}

		\begin{tikzpicture}
		\draw[->, thick] (-2.5,0) -- (3,0);
		\draw[->, thick] (0,-1) -- (0,3);
		\draw[solid, blue]  (2,0) arc (0: 180: 2);
		\draw[->, dashed] (0,0) -- node[right]{$\frac{|p|}{2}$} (1.25, 1.55);
		\path [fill=blue, fill opacity = 0.3] (2,0) arc (0: 180: 2);
		\node at (0,-1.5) {Case (3). }; 
		\end{tikzpicture} 
	\end{center} \qedhere
\end{proof}

In the classical theory, it is evident from the Toeplitz-Hausdroff theorem  that the numerical range of complex matrix is convex. In case of quaternionic matrices, we prove Toeplitz-Hausdorff like theorem that is, every section of quaternionic numerical range is convex.   
\begin{thm}(Toeplitz-Hausdorff like theorem) \label{Theorem: TH}
	Let $A \in M_{n}(\mathbb{H})$. Then every section of $A$ is convex.
\end{thm}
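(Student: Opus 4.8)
The plan is to run the classical compression argument of \cite{GustafsonRao}, using Lemma \ref{Lemma:keylemma} as the base case. First I would reduce the problem to a single slice. Fix $m \in \mathbb{S}$; the map $\alpha + i\beta \mapsto \alpha + m\beta$ is an $\mathbb{R}$-affine bijection of $\mathbb{C}^+$ onto $\mathbb{C}_m^+$ that carries $W_{\mathbb{H}}^+(A)$ onto $W_{\mathbb{H}}(A)\cap \mathbb{C}_m^+$ and preserves convexity; recall from Lemma \ref{Lemma:circular} that $W_{\mathbb{H}}(A) = \Omega_{W_{\mathbb{H}}^+(A)}$, so every section is the image of $W_{\mathbb{H}}^+(A)$ under such a bijection. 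Hence it suffices to prove that the complex section $W_{\mathbb{H}}^+(A) = W_{\mathbb{H}}(A)\cap \mathbb{C}^+$ is convex.

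To this end I would take two points $z_1, z_2 \in W_{\mathbb{H}}^+(A)$, say $z_k = \langle X_k, AX_k\rangle_{\mathbb{H}}$ with $X_k \in S_{\mathbb{H}^n}$ and $z_k \in \mathbb{C}^+$; since $\mathbb{C}^+$ is convex it is enough to show the segment $[z_1,z_2]$ lies in $W_{\mathbb{H}}^+(A)$. If $X_1, X_2$ are right-linearly dependent over $\mathbb{H}$, then $X_2 = X_1 q$ for some $q$ with $|q|=1$, so $z_2 = \overline{q}\,z_1\,q \in [z_1]$; as $[z_1]\cap \mathbb{C}^+$ is the single point $\mathrm{re}(z_1)+i\,|\mathrm{im}(z_1)|$, we get $z_1 = z_2$ and there is nothing to prove. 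Otherwise $X_1, X_2$ span a right subspace $M$ of quaternionic dimension $2$. Applying Gram--Schmidt (with the convention that the inner product is linear in the second slot) I would produce an orthonormal pair $e_1, e_2$ spanning $M$ and form the isometry $V \colon \mathbb{H}^2 \to \mathbb{H}^n$, $V = [\,e_1 \mid e_2\,]$, so that $V^{\ast}V = I_2$. Setting $B := V^{\ast}AV \in M_2(\mathbb{H})$, for every $Z \in S_{\mathbb{H}^2}$ one has $VZ \in S_{\mathbb{H}^n}$ and $\langle Z, BZ\rangle_{\mathbb{H}} = \langle VZ, A(VZ)\rangle_{\mathbb{H}}$, whence $W_{\mathbb{H}}(B)\subseteq W_{\mathbb{H}}(A)$. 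Because $X_1, X_2 \in \operatorname{range}(V)$, writing $Z_k := V^{\ast}X_k \in S_{\mathbb{H}^2}$ gives $z_k = \langle Z_k, BZ_k\rangle_{\mathbb{H}}$, so $z_1, z_2 \in W_{\mathbb{H}}(B)\cap \mathbb{C}^+ = W_{\mathbb{H}}^+(B)$.

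Now I would invoke the base case: by Lemma \ref{Lemma:keylemma} the section $W_{\mathbb{H}}^+(B)$ is convex, so $[z_1,z_2]\subseteq W_{\mathbb{H}}^+(B) \subseteq W_{\mathbb{H}}^+(A)$, which is exactly what is needed. Thus $W_{\mathbb{H}}^+(A)$ is convex, and by the first paragraph every section of $W_{\mathbb{H}}(A)$ is convex.

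The main obstacle, and the only place where the non-commutativity of $\mathbb{H}$ genuinely enters, is the compression step: one must verify that Gram--Schmidt behaves correctly in the right quaternionic Hilbert space $\mathbb{H}^n$ (so that $e_1,e_2$ are genuinely orthonormal and $V$ is an isometry) and that the compressed matrix satisfies $\langle Z, V^{\ast}AVZ\rangle_{\mathbb{H}} = \langle VZ, A(VZ)\rangle_{\mathbb{H}}$ under the chosen one-sided scalar conventions. Once this identity is in place the reduction to the $2\times 2$ result is immediate. As an alternative packaging of the same idea, one could extend $\{e_1,e_2\}$ to a full orthonormal basis, form the corresponding unitary $U \in M_n(\mathbb{H})$, and use the unitary invariance in $(3)$ of Theorem \ref{Theorem:Main} to pass to $U^{\ast}AU$, whose leading $2\times 2$ principal submatrix plays the role of $B$.
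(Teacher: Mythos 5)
Your proposal is correct and follows essentially the same route as the paper: compress $A$ to the two-dimensional right subspace spanned by the two vectors realizing $z_1,z_2$ (your isometry $V^{\ast}AV$ is just the paper's $PAP|_V$ written via an orthonormal basis), invoke Lemma \ref{Lemma:keylemma} for the $2\times 2$ case, and observe that the compressed section sits inside $W_{\mathbb{H}}^{+}(A)$. The only additions are cosmetic but welcome: you explicitly dispose of the degenerate case where the two vectors are right-linearly dependent, and you spell out the slice-identification step that the paper states in one line.
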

\begin{proof} As every section $W_{\mathbb{H}}(A) \cap \mathbb{C}^{+}_{m}$ for $m \in \mathbb{S}$ is isomorphic to $W_{\mathbb{H}}^{+}(A)$, it is sufficient to show $W^{+}_{\mathbb{H}}(A)$ is convex. Let $z_{1}, z_{2} \in W^{+}_{\mathbb{H}}(A)$
	\begin{equation*}
	z_{1}:=\big\langle X, AX \big\rangle_{\mathbb{H}} \; \text{and}\; z_{2}:=\big\langle Y, AY\big\rangle_{\mathbb{H}}
	\end{equation*}
	for some $X, Y\in S_{\mathbb{H}^{n}}$. We show that the line segment joining $z_{1}$ and $z_{2}$ is contained in $W^{+}_{\mathbb{H}}(A)$.  Let  $V$ be two dimensional right quaternionic Hilbert space generated by $\{X, Y\}$ and $P$ be the orthogonal projection of $\mathbb{H}^{n}$ onto $V$. Then $V$ is isomorphic to $\mathbb{H}^{2}$. Since $A$ is a right quaternionic linear operator on $\mathbb{H}^{n}$, we see that the restriction of $PAP$ onto $V$ is a right quaternionic linear operator on $V$ such that 
	\begin{align*}
	\langle X, PAP X\rangle  &= \langle PX, APX\rangle = \langle X, AX\rangle = z_{1} ,\\
	\langle Y, PAP Y\rangle  &= \langle PY, APY\rangle = \langle Y, AY\rangle = z_{2}. 
	\end{align*} 
	It implies that $z_{1}, z_{2} \in W_{\mathbb{H}}^{+}(PAP|_{V})$. Since $PAP|_{V} \in M_{2}(\mathbb{H})$, by Lemma \ref{Lemma:keylemma}, we see that $W^{+}_{\mathbb{H}}(PAP|_{V})$ is convex.  So the segment joining $z_{1}$ and $ z_{2}$ is contained  in $W^{+}_{\mathbb{H}}(PAP|_{V})$. Now we show that $W^{+}_{\mathbb{H}}(PAP|_{V}) \subseteq W^{+}_{\mathbb{H}}(A)$. Let $\omega \in W^{+}_{\mathbb{H}}(PAP|_{V})$,  then $\omega = \langle v, PAP|_{V}(v)\rangle $, for some $v\in V$, $\|v\|=1$. Since $P$ is orthogonal projection onto $V$, we have $\|Pv\| = \|v\|=1$ and 
	\begin{equation*}
	\omega = \langle Pv, APv\rangle_{\mathbb{H}} \in W^{+}_{\mathbb{H}}(A). 
	\end{equation*}
	This shows that the line segment joining $z_{1}$ and $z_{2}$ is contained in $W^{+}_{\mathbb{H}}(A)$. Hence $W^{+}_{\mathbb{H}}(A)$ is a convex subset of  $\mathbb{C}$.  
\end{proof}
\begin{note}
	Though the quaternionic numerical range of $A \in M_{n}(\mathbb{H})$ is not convex, from Theorem \ref{Theorem: TH} it is clear that every section of $W_{\mathbb{H}}(A)$ is convex.
\end{note}
\section{Numerical radius inequalities }
In this section we prove that the numerical radius of a quaternionic matrix $A\in M_{n}(\mathbb{H})$ is same as the numerical radius of complex matrix $\rchi_{A}\in M_{2n}(\mathbb{C})$. Further, we show that  the quaternionic numerical radius, denoted by ${\mathop{w}}_{\mathbb{H}}(\cdot)$ defines a norm on $M_{n}(\mathbb{H})$. In particular, for normal matrices over quaternions,  the numerical radius coinsides with the operator norm $\|\cdot \|$. We prove an inequality (see Theorem \ref{Theorem: betterestimation}) which is analogous to classical result that provides a better estimate for an upper bound of numerical radius.


We recall the definition for the numerical radius of matrices over quaternions: for a given $A \in M_{n}(\mathbb{H})$,
the quaternionic numerical radius, denoted by ${\mathop{w}}_{\mathbb{H}}(A)$, is defined as 
\begin{equation*}
{\mathop{w}}_{\mathbb{H}}(A):= \sup\Big\{|q|: q \in W_{\mathbb{H}}(A)\Big\}.
\end{equation*}

Since $W_{\mathbb{H}}(A)$ is circular, then the numerical radius of $A$ can also be defined as, 
\begin{equation*}
{\mathop{w}}_{\mathbb{H}}(A) = \sup \Big\{|z|:\; z \in W^{+}_{\mathbb{H}}(A)\Big\}.
\end{equation*}
Note that if $q \in W_{\mathbb{H}}(A)$, then $q = \langle X, AX \rangle_{\mathbb{H}} $ for some $X \in S_{\mathbb{H}^{n}}$. By Cauchy-Schwarz inequality, $|q|= |\langle X, AX\rangle_{\mathbb{H}}| \leq \|A\|$. This implies that
\begin{equation}\label{Equation:inequality}
{\mathop{w}}_{\mathbb{H}}(A) \leq \|A\|.
\end{equation}
Now we show that the quaternionic numerical radius of $A \in M_{n}(H)$ is same as the complex numerical radius, denoted by ${\mathop{w}}_{\mathbb{C}}(\rchi_{A})$, of $\rchi_{A}$.

\begin{thm}\label{Theorem: SameNumericalradius}
	Let $A\in M_{n}(\mathbb{H})$. Then 
	\begin{equation*}
	{\mathop{w}}_{\mathbb{H}}(A)= {\mathop{w}}_{\mathbb{C}}(\rchi_{A}) .
	\end{equation*}
\end{thm}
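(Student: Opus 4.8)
The plan is to reduce both numerical radii to suprema of moduli taken over subsets of $\mathbb{C}$, and then to exploit the fact that passing to the standard representative in $\mathbb{C}^{+}$ preserves the modulus of a quaternion while the complex projection can only decrease it. Concretely, I would first record the elementary but decisive observation that for any $q = q_{0}+q_{1}i+q_{2}j+q_{3}k \in \mathbb{H}$ the standard representative $z := \text{re}(q) + i\,|\text{im}(q)| = q_{0}+ i\sqrt{q_{1}^{2}+q_{2}^{2}+q_{3}^{2}}$ satisfies $|z| = |q|$, whereas the complex projection obeys $|co(q)| = \sqrt{q_{0}^{2}+q_{1}^{2}} \leq |q|$. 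Since $W_{\mathbb{H}}(A)$ is circular by Lemma \ref{Lemma:circular}, each $q \in W_{\mathbb{H}}(A)$ shares its modulus with its standard representative in $W_{\mathbb{H}}^{+}(A)$, so that ${\mathop{w}}_{\mathbb{H}}(A) = \sup\{|z| : z \in W_{\mathbb{H}}^{+}(A)\}$, exactly as recorded just before this theorem.

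For the inequality ${\mathop{w}}_{\mathbb{H}}(A) \leq {\mathop{w}}_{\mathbb{C}}(\rchi_{A})$, I would invoke Remark \ref{Remark: Onlysubset}, which (via Proposition \ref{Proposition:projection}) gives the chain of inclusions $W_{\mathbb{H}}^{+}(A) \subseteq W_{\mathbb{H}}(A) \cap \mathbb{C} \subseteq W_{\mathbb{H}}(A:\mathbb{C}) = W_{\mathbb{C}}(\rchi_{A})$. Hence every $z \in W_{\mathbb{H}}^{+}(A)$ lies in $W_{\mathbb{C}}(\rchi_{A})$ and satisfies $|z| \leq {\mathop{w}}_{\mathbb{C}}(\rchi_{A})$; taking the supremum over $z \in W_{\mathbb{H}}^{+}(A)$ yields the claimed bound.

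For the reverse inequality ${\mathop{w}}_{\mathbb{C}}(\rchi_{A}) \leq {\mathop{w}}_{\mathbb{H}}(A)$, I would take an arbitrary $\lambda \in W_{\mathbb{C}}(\rchi_{A})$. By Proposition \ref{Proposition:projection} we have $W_{\mathbb{C}}(\rchi_{A}) = W_{\mathbb{H}}(A:\mathbb{C})$, so $\lambda = co(q)$ for some $q = \langle X, AX\rangle_{\mathbb{H}} \in W_{\mathbb{H}}(A)$ with $X \in S_{\mathbb{H}^{n}}$. The modulus comparison $|co(q)| \leq |q|$ then gives $|\lambda| \leq |q| \leq {\mathop{w}}_{\mathbb{H}}(A)$, and taking the supremum over $\lambda \in W_{\mathbb{C}}(\rchi_{A})$ completes the argument. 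Combining the two inequalities gives ${\mathop{w}}_{\mathbb{H}}(A) = {\mathop{w}}_{\mathbb{C}}(\rchi_{A})$.

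The argument is short once the two modulus comparisons are isolated, and I do not anticipate a genuine obstacle; the only point needing care is the bookkeeping between the three complex pictures of $W_{\mathbb{H}}(A)$ — the section $W_{\mathbb{H}}^{+}(A)$, the intersection $W_{\mathbb{H}}(A)\cap\mathbb{C}$, and the projection $W_{\mathbb{H}}(A:\mathbb{C}) = W_{\mathbb{C}}(\rchi_{A})$ — together with the fact that all suprema in question are actually attained on compact sets (Theorem \ref{Theorem:Main} and the Toeplitz–Hausdorff theorem applied to $\rchi_{A}$), so that the equality $|z| = |q|$ holding only at the level of representatives causes no loss.
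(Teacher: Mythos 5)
Your proposal is correct and follows essentially the same route as the paper: both directions rest on Proposition \ref{Proposition:projection}, the circularity of $W_{\mathbb{H}}(A)$, and the two modulus comparisons $|\text{re}(q)+i|\text{im}(q)||=|q|$ and $|co(q)|\leq |q|$ (the paper phrases the first inequality via $W_{\mathbb{H}}(A)\subseteq\Omega_{W_{\mathbb{C}}(\rchi_{A})}$ rather than via the section $W_{\mathbb{H}}^{+}(A)$, but the underlying idea is identical). The closing remark about attainment of the suprema is not needed for the argument, but it does no harm.
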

\begin{proof}
	Since $W_{\mathbb{H}}(A) \subseteq \Omega_{W_{\mathbb{C}}(\rchi_{A})}$, 
	we have
	\begin{equation*}
	{\mathop{w}}_{\mathbb{H}}(A) \leq \sup\big\{|q|:\; q \in \Omega_{W_{\mathbb{C}}(\rchi_{A})}\big\} = \sup\big\{|z|:\; z \in {W_{\mathbb{C}}(\rchi_{A})}\big\} = {\mathop{w}}_{\mathbb{C}}(\rchi_{A}).
	\end{equation*} 
	Now we prove the reverse inequality.  We know  from Proposition \ref{Proposition:projection}  that $W_{\mathbb{C}}(\rchi_{A}) =W_{\mathbb{H}}(A:\mathbb{C}) $. Thus   $z \in W_{\mathbb{C}}(\rchi_{A})$ if and only if there exist a  $z^{\prime} \in \mathbb{C}$ such that $z+z^{\prime}\cdot j \in W_{\mathbb{H}}(A)$ and $|z| \leq |z+z^{\prime}\cdot j|$. This implies that 
	\begin{align*}
	{\mathop{w}}_{\mathbb{C}}(\rchi_{A}) &= \sup \{|z|: z\in W_{\mathbb{C}}(\rchi_{A})\} \\
	&\leq \sup \{|z+z^{\prime}\cdot j|: \; z+z^{\prime} \cdot j \in W_{\mathbb{H}}(A)\} \\
	&= \sup \{|q|: \; q \in W_{\mathbb{H}}(A)\}\\
	&= {\mathop{w}}_{\mathbb{H}}(A).
	\end{align*}
	Hence that result.
\end{proof}
\begin{cor}\label{Corollary: NormNumerialradius}
	If $A \in M_{n}(\mathbb{H})$ is normal, then ${\mathop{w}}_{\mathbb{H}}(A) = \|A\|$.
\end{cor}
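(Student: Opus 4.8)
The plan is to transfer the statement to the complex companion matrix $\rchi_{A}$ and then invoke the classical fact that a normal complex matrix has numerical radius equal to its operator norm. Since Equation (\ref{Equation:inequality}) already supplies $w_{\mathbb{H}}(A) \le \|A\|$, it suffices to establish the reverse inequality $\|A\| \le w_{\mathbb{H}}(A)$.

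First I would record the bookkeeping that does the real work. Because $A$ is normal, part $(4)$ of Theorem \ref{Theorem: properties} guarantees that $\rchi_{A}$ is a normal complex matrix, and the norm identity $\|A\| = \|\rchi_{A}\|$ recalled in Section 1 lets us pass freely between the two norms. Combining these with Theorem \ref{Theorem: SameNumericalradius}, which gives $w_{\mathbb{H}}(A) = w_{\mathbb{C}}(\rchi_{A})$, the corollary reduces \emph{entirely} to the single complex identity $w_{\mathbb{C}}(\rchi_{A}) = \|\rchi_{A}\|$.

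The one genuinely substantive step is then the classical result for complex matrices: if $B$ is normal, then $w_{\mathbb{C}}(B) = \|B\|$. I would prove it from the spectral theorem. Since $\rchi_{A}$ is normal it is unitarily diagonalizable, so $\|\rchi_{A}\| = \max\{|\lambda| : \lambda \in \sigma(\rchi_{A})\}$; choosing a unit eigenvector $u$ for an eigenvalue $\lambda_{0}$ of maximal modulus gives $\langle u, \rchi_{A} u\rangle_{\mathbb{C}} = \lambda_{0}$, whence $\|\rchi_{A}\| = |\lambda_{0}| = |\langle u, \rchi_{A} u\rangle_{\mathbb{C}}| \le w_{\mathbb{C}}(\rchi_{A})$. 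The opposite inequality $w_{\mathbb{C}}(\rchi_{A}) \le \|\rchi_{A}\|$ is just the complex analogue of Equation (\ref{Equation:inequality}) (Cauchy--Schwarz), so equality holds.

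Chaining these identities together yields
\begin{equation*}
\|A\| = \|\rchi_{A}\| = w_{\mathbb{C}}(\rchi_{A}) = w_{\mathbb{H}}(A),
\end{equation*}
which is the claim. I do not anticipate any serious obstacle: every link in the chain is either an application of a result already established in the excerpt or the standard spectral-theorem argument for normal complex matrices, and the latter is the only place where normality is actually used.
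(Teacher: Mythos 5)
Your proposal is correct and follows essentially the same route as the paper: pass to $\rchi_{A}$, use Theorem \ref{Theorem: properties} to get normality of $\rchi_{A}$, Theorem \ref{Theorem: SameNumericalradius} and the identity $\|A\|=\|\rchi_{A}\|$ to transfer both sides, and the classical fact $w_{\mathbb{C}}(\rchi_{A})=\|\rchi_{A}\|$ for normal complex matrices. The only difference is that you prove that classical fact via the spectral theorem, whereas the paper simply cites it from Gustafson--Rao.
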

\begin{proof}Since $A$ is normal, then $\rchi_{A}$ is normal by Theorem \ref{Theorem: properties} and ${\mathop{w}}_{\mathbb{C}}(\rchi_{A}) = \|\rchi_{A}\|$ by \cite[Theorem 1.4-2]{GustafsonRao}. From Theorem \ref{Theorem: SameNumericalradius}, it is clear that
	\begin{equation*}
	{\mathop{w}}_{\mathbb{H}}(A) = {\mathop{w}}_{\mathbb{C}}(\rchi_{A}) = \|\rchi_{A}\|=\|A\|. \qedhere
	\end{equation*}
\end{proof}

\noindent Note that the same result for quaternionic normal operators is proved in \cite{Ramesh}. Now we show that ${\mathop{w}_{\mathbb{H}}}(A)$ is equivalent to the operator norm of $A$.
\begin{thm}\label{Theorem: Equivalent}
	Let $A \in M_{n}(\mathbb{H})$. Then  ${\mathop{w}_{\mathbb{H}}}(A)\leq \|A\| \leq 2{\mathop{w}_{\mathbb{H}}}(A)$. 
\end{thm}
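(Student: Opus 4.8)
The first inequality $w_{\mathbb{H}}(A)\le \|A\|$ is already in hand: it is exactly Equation (\ref{Equation:inequality}), obtained from Cauchy--Schwarz. So the entire task reduces to establishing $\|A\|\le 2\,w_{\mathbb{H}}(A)$, and the plan is to mimic the classical Hermitian/anti-Hermitian splitting argument, using Corollary \ref{Corollary: NormNumerialradius} in place of the complex fact that self-adjoint operators have numerical radius equal to norm.

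First I would write the Cartesian decomposition $A=B+C$, where $B:=\frac12(A+A^{\ast})$ is self-adjoint and $C:=\frac12(A-A^{\ast})$ is anti-self-adjoint. Both summands are normal: a self-adjoint matrix is trivially normal, and $C^{\ast}=-C$ gives $CC^{\ast}=-C^{2}=C^{\ast}C$. Hence Corollary \ref{Corollary: NormNumerialradius} applies to each and yields $\|B\|=w_{\mathbb{H}}(B)$ and $\|C\|=w_{\mathbb{H}}(C)$. By the triangle inequality for the operator norm, $\|A\|\le \|B\|+\|C\|=w_{\mathbb{H}}(B)+w_{\mathbb{H}}(C)$, so it remains only to bound each of these numerical radii by $w_{\mathbb{H}}(A)$.

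The next step is the short inner-product computation relating the numerical values of $B$ and $C$ to the real and imaginary parts of those of $A$. Using $\langle X,A^{\ast}X\rangle_{\mathbb{H}}=\overline{\langle X,AX\rangle}_{\mathbb{H}}$ (conjugate symmetry together with the defining property of the adjoint), for every $X\in S_{\mathbb{H}^{n}}$ one gets $\langle X,BX\rangle_{\mathbb{H}}=\text{re}(\langle X,AX\rangle_{\mathbb{H}})$ and $\langle X,CX\rangle_{\mathbb{H}}=\text{im}(\langle X,AX\rangle_{\mathbb{H}})$. Since $|\text{re}(q)|\le |q|$ and $|\text{im}(q)|\le |q|$ for every $q\in\mathbb{H}$, taking suprema over $S_{\mathbb{H}^{n}}$ gives $w_{\mathbb{H}}(B)\le w_{\mathbb{H}}(A)$ and $w_{\mathbb{H}}(C)\le w_{\mathbb{H}}(A)$. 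Combining this with the previous bound,
\[
\|A\|\le w_{\mathbb{H}}(B)+w_{\mathbb{H}}(C)\le 2\,w_{\mathbb{H}}(A),
\]
which is the desired inequality.

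I expect no serious obstacle; the only points needing care are (i) checking that the anti-self-adjoint summand $C$ is genuinely normal, so that Corollary \ref{Corollary: NormNumerialradius} is legitimately invoked for it and not merely for the self-adjoint part, and (ii) getting the adjoint/conjugation bookkeeping right in the identities $\langle X,BX\rangle_{\mathbb{H}}=\text{re}(\langle X,AX\rangle_{\mathbb{H}})$ and $\langle X,CX\rangle_{\mathbb{H}}=\text{im}(\langle X,AX\rangle_{\mathbb{H}})$, since the quaternionic inner product is conjugate-linear in the first slot. Both are routine once the decomposition is in place.
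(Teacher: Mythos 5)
Your proof is correct, but it follows a genuinely different route from the paper's. The paper obtains $\|A\|\le 2\,w_{\mathbb{H}}(A)$ by transferring the whole inequality from the complex side: it invokes the classical bound $\|\rchi_{A}\|\le 2\,w_{\mathbb{C}}(\rchi_{A})$ for the associated complex matrix and then uses $w_{\mathbb{H}}(A)=w_{\mathbb{C}}(\rchi_{A})$ (Theorem \ref{Theorem: SameNumericalradius}) together with $\|A\|=\|\rchi_{A}\|$. You instead rerun the classical Cartesian-decomposition argument directly over $\mathbb{H}$: write $A=B+C$ with $B$ self-adjoint and $C$ anti-self-adjoint, apply Corollary \ref{Corollary: NormNumerialradius} to each normal summand, and observe that $\langle X,BX\rangle_{\mathbb{H}}=\text{re}\,\langle X,AX\rangle_{\mathbb{H}}$ and $\langle X,CX\rangle_{\mathbb{H}}=\text{im}\,\langle X,AX\rangle_{\mathbb{H}}$ force $w_{\mathbb{H}}(B),w_{\mathbb{H}}(C)\le w_{\mathbb{H}}(A)$. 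All your steps check out: the adjoint identity $\langle X,A^{\ast}X\rangle_{\mathbb{H}}=\overline{\langle X,AX\rangle}_{\mathbb{H}}$ holds in the quaternionic setting, $C$ is indeed normal, and Corollary \ref{Corollary: NormNumerialradius} covers it. What each approach buys: the paper's is a two-line reduction once Theorem \ref{Theorem: SameNumericalradius} is in place, but it leans on the full classical inequality as a black box; yours only needs the normal-matrix equality $w_{\mathbb{H}}=\|\cdot\|$ and elementary quaternion estimates, so it is more self-contained at the final step and makes visible where the factor $2$ comes from --- it also improves on the polarization-identity argument of Remark \ref{Theorem: Equivalent} (which only yields the constant $4$) while staying entirely on the quaternionic side.
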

\begin{proof} Since ${\mathop{w}}_{\mathbb{H}}(A) \leq \|A\|$, it is enough to prove the second inequality. By \cite[Theorem 1.3-1]{GustafsonRao}, we have 
	\begin{equation*}
	\|\rchi_{A}\| \leq 2 {\mathop{w}}_{\mathbb{C}}(\rchi_{A}).
	\end{equation*}
	Then by Theorem \ref{Theorem: SameNumericalradius}, we conclude that
	\begin{equation*}
	{\mathop{w}}_{\mathbb{H}}(A)\leq \|A\| \leq 2{\mathop{w}}_{\mathbb{H}}(A).
	\end{equation*}
	That is, the numerical radius is equivalent to operator norm $\|\cdot\|$. Hence ${\mathop{w}}_{\mathbb{H}}(\cdot)$ defines a norm on $M_{n}(\mathbb{H})$.
\end{proof}
\begin{rmk} We can use the technique follwed in the classical proof to show that   ${\mathop{w}}_{\mathbb{H}}(A)$ is equivalent to $\|A\|$ : 
	
	For every $X,Y \in \mathbb{H}^{n}$, by quaternionic version of polarization identity \cite[Equation (2.4)]{Ghiloni}, we have 
	\begin{equation*}
	4 \langle X, AY\rangle = \sum\limits_{\ell = 0}^{3} \Big[\big\langle Xe_{\ell}+Y, A(Xe_{\ell}+Y) \big\rangle_{\mathbb{H}} -  \big\langle Xe_{\ell}-Y, A(Xe_{\ell}-Y) \big\rangle_{\mathbb{H}}\Big]e_{\ell},
	\end{equation*}
	where $e_{0}=1, e_{1}= i, e_{2}= j$ and $e_{3}=k$. By taking modulus on both sides, we get 
	\begin{align*}
	4 |\langle X, AY\rangle| &\leq {\mathop{w}}_{\mathbb{H}}(A) \sum\limits_{\ell = 0}^{3} \Big[\|Xe_{\ell}+Y\|^{2}+ \|Xe_{\ell}-Y\|^{2}\Big]\\
	&\leq 8{\mathop{w}}_{\mathbb{H}}(A) (\|X\|^{2}+\|Y\|^{2}).
	\end{align*}
	By taking supremum over $X, Y \in S_{\mathbb{H}^{n}}$ on both sides, we get
	\begin{equation*}
	\|A\| \leq 4 \; {\mathop{w}}_{\mathbb{H}}(A). \qedhere
	\end{equation*}
	But the technique followed in  Theorem \ref{Theorem: Equivalent} provides a better estimation for the lower bound of ${\mathop{w}}_{\mathbb{H}}(A)$.
\end{rmk}
\begin{thm}
	Let $A, B \in M_{n}(\mathbb{H})$. Then 
	\begin{enumerate}
		\item ${\mathop{w}}_{\mathbb{H}}(ABA^{\ast}) \leq \|A\|^{2} {\mathop{w}}_{\mathbb{H}}(B)$. In particular, for a {\bf compression}  $PBP$ of $B$, where $P \in M_{n}(\mathbb{H})$ be such that $P^{\ast}=P$ and $P^{2}=P$, we have
		\begin{equation*}
		{\mathop{w}}_{\mathbb{H}}(PBP) \leq {\mathop{w}}_{\mathbb{H}}(B).
		\end{equation*}
		\item ${\mathop{w}}_{\mathbb{H}}\big(\begin{bmatrix}
		A &0\\
		0& B
		\end{bmatrix}\big) = \max\{{\mathop{w}}_{\mathbb{H}}(A),\; {\mathop{w}}_{\mathbb{H}}(B)\}$.
	\end{enumerate}
	
\end{thm}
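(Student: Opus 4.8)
The plan is to argue directly with the quaternionic inner product, exploiting that all the scaling factors that arise are \emph{real} and hence central and self-conjugate. The single computation underlying both parts is the homogeneity identity: for any $Y \in \mathbb{H}^{n}\setminus\{0\}$ and any $C \in M_{n}(\mathbb{H})$, setting $\widehat{Y} := Y\,\|Y\|^{-1} \in S_{\mathbb{H}^{n}}$,
\[
\langle Y, CY\rangle_{\mathbb{H}} = \|Y\|^{2}\,\langle \widehat{Y}, C\widehat{Y}\rangle_{\mathbb{H}}.
\]
This holds because conjugate-linearity in the first slot and linearity in the second produce the factor $\overline{\|Y\|}\,\|Y\|$, and since $\|Y\|$ is real this equals $\|Y\|^{2}$ (rather than a twisted conjugate $\overline{\lambda}\,q\,\lambda$ that a genuinely quaternionic scalar would give). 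Consequently $|\langle Y, CY\rangle_{\mathbb{H}}| \le \|Y\|^{2}\,{\mathop{w}}_{\mathbb{H}}(C)$.

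For part $(1)$, fix $X \in S_{\mathbb{H}^{n}}$ and put $Y := A^{\ast}X$. Using the adjoint property,
\[
\langle X, ABA^{\ast}X\rangle_{\mathbb{H}} = \langle A^{\ast}X, B A^{\ast}X\rangle_{\mathbb{H}} = \langle Y, BY\rangle_{\mathbb{H}}.
\]
By the homogeneity identity (the case $Y=0$ being trivial), $|\langle Y, BY\rangle_{\mathbb{H}}| \le \|Y\|^{2}\,{\mathop{w}}_{\mathbb{H}}(B)$, and $\|Y\| = \|A^{\ast}X\| \le \|A^{\ast}\|\,\|X\| = \|A\|$, where $\|A^{\ast}\| = \|A\|$ follows from $\|A\|=\|\rchi_{A}\|$ together with $\rchi_{A}^{\ast}=\rchi_{A^{\ast}}$. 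Taking the supremum over $X \in S_{\mathbb{H}^{n}}$ gives ${\mathop{w}}_{\mathbb{H}}(ABA^{\ast}) \le \|A\|^{2}\,{\mathop{w}}_{\mathbb{H}}(B)$. For the compression, specialize to $A = P$: since $P^{\ast}=P$ we have $PBP^{\ast}=PBP$, and an idempotent self-adjoint $P$ satisfies $\|P\|\le 1$, so the inequality becomes ${\mathop{w}}_{\mathbb{H}}(PBP) \le {\mathop{w}}_{\mathbb{H}}(B)$.

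For part $(2)$, write $C := \begin{bmatrix} A & 0\\ 0 & B\end{bmatrix}$. Restricting to unit vectors of the form $\begin{bmatrix}X_{1}\\0\end{bmatrix}$ and $\begin{bmatrix}0\\X_{2}\end{bmatrix}$ shows $W_{\mathbb{H}}(A)\cup W_{\mathbb{H}}(B)\subseteq W_{\mathbb{H}}(C)$, hence $\max\{{\mathop{w}}_{\mathbb{H}}(A),{\mathop{w}}_{\mathbb{H}}(B)\} \le {\mathop{w}}_{\mathbb{H}}(C)$. For the reverse bound, take any $X=\begin{bmatrix}X_{1}\\X_{2}\end{bmatrix}\in S_{\mathbb{H}^{2n}}$, so that $\|X_{1}\|^{2}+\|X_{2}\|^{2}=1$, and expand (applying the homogeneity identity to each block, dropping any vanishing block)
\[
\langle X, CX\rangle_{\mathbb{H}} = \langle X_{1}, AX_{1}\rangle_{\mathbb{H}} + \langle X_{2}, BX_{2}\rangle_{\mathbb{H}} = \|X_{1}\|^{2}\langle \widehat{X}_{1}, A\widehat{X}_{1}\rangle_{\mathbb{H}} + \|X_{2}\|^{2}\langle \widehat{X}_{2}, B\widehat{X}_{2}\rangle_{\mathbb{H}}.
\]
The triangle inequality then yields $|\langle X, CX\rangle_{\mathbb{H}}| \le \|X_{1}\|^{2}\,{\mathop{w}}_{\mathbb{H}}(A)+\|X_{2}\|^{2}\,{\mathop{w}}_{\mathbb{H}}(B) \le \max\{{\mathop{w}}_{\mathbb{H}}(A),{\mathop{w}}_{\mathbb{H}}(B)\}$, the last step because the middle expression is a real convex combination (weights summing to $1$) of the two numerical radii. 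Taking the supremum over $X$ gives ${\mathop{w}}_{\mathbb{H}}(C)\le \max\{{\mathop{w}}_{\mathbb{H}}(A),{\mathop{w}}_{\mathbb{H}}(B)\}$, and combining the two directions completes the proof.

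No step poses a genuine obstacle; the only point demanding care is the homogeneity identity, where one must invoke that $\|Y\|$ is real so that right multiplication does not conjugate the value of the inner product. This is precisely the reason the numerical radius behaves like its complex counterpart despite the non-convexity of $W_{\mathbb{H}}$. I note that the argument could alternatively be routed through Theorem \ref{Theorem: SameNumericalradius} and the classical complex estimates, but the direct computation above is shorter and self-contained.
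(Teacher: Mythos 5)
Your proposal is correct and follows essentially the same route as the paper: part (1) via $\langle X, ABA^{\ast}X\rangle_{\mathbb{H}} = \langle A^{\ast}X, BA^{\ast}X\rangle_{\mathbb{H}}$ and the bound $|\langle Y, BY\rangle_{\mathbb{H}}| \le \|Y\|^{2}\,{\mathop{w}}_{\mathbb{H}}(B)$, and part (2) via the blockwise expansion, the convex-combination estimate for the upper bound, and the embeddings $\begin{bmatrix}X\\0\end{bmatrix}$, $\begin{bmatrix}0\\Y\end{bmatrix}$ for the lower bound. The only difference is that you spell out the homogeneity identity and $\|A^{\ast}\|=\|A\|$ explicitly, which the paper uses tacitly (and in fact your care is even slightly more than needed, since $|\overline{\lambda}\,q\,\lambda| = |\lambda|^{2}|q|$ for any quaternion $\lambda$).
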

\begin{proof}
	Proof of $(1):$	Let $X \in S_{\mathbb{H}^{n}}$, then 
	\begin{align*}
	|\langle X, ABA^{\ast}X\rangle_{\mathbb{H}}| &= |\langle A^{\ast}X, BA^{\ast}X \rangle_{\mathbb{H}} | \\
	&	\leq \|A^{\ast}X\|^{2} {\mathop{w}}_{\mathbb{H}}(B)\\
	&\leq \|A^{\ast}\|^{2}{\mathop{w}}_{\mathbb{H}}(A)\\
	&= \|A\|^{2}{\mathop{w}}_{\mathbb{H}}(A).
	\end{align*}
	Hence ${\mathop{w}}_{\mathbb{H}}(ABA^{\ast}) \leq \|A\|^{2} {\mathop{w}}_{\mathbb{H}}(B)$. In particular, for a compression of $B$, 
	\begin{equation*}
	{\mathop{w}}_{\mathbb{H}}(PBP) \leq \|P\| {\mathop{w}}_{\mathbb{H}}(B)= {\mathop{w}}_{\mathbb{H}}(B).
	\end{equation*}
	\noindent Proof of $(2):$
	Let $\begin{bmatrix}
	X\\
	Y
	\end{bmatrix} \in S_{\mathbb{H}^{n}\oplus \mathbb{H}^{n}}$, then 
	\begin{align*}
	|\Big\langle \begin{bmatrix}
	X\\
	Y
	\end{bmatrix}, \begin{bmatrix}
	A &0\\
	0& B
	\end{bmatrix} \begin{bmatrix}
	X\\
	Y
	\end{bmatrix}\Big\rangle_{\mathbb{H}}| &\leq |\langle X, AX\rangle_{\mathbb{H}}| + |\langle Y, BY\rangle_{\mathbb{H}}|\\
	&\leq {\mathop{w}}_{\mathbb{H}}(A)\|X\|^{2}+ {\mathop{w}}_{\mathbb{H}}(B)\|Y\|^{2}\\
	&\leq \max\{{\mathop{w}}_{\mathbb{H}}(A),{\mathop{w}}_{\mathbb{H}}(B)\}(\|X\|^{2}+\|Y\|^{2})\\
	&= \max\{{\mathop{w}}_{\mathbb{H}}(A),{\mathop{w}}_{\mathbb{H}}(B)\}.
	\end{align*}
	Thus ${\mathop{w}}_{\mathbb{H}}\big(\begin{bmatrix}
	A &0\\
	0& B
	\end{bmatrix}\big) \leq \max\{{\mathop{w}}_{\mathbb{H}}(A),{\mathop{w}}_{\mathbb{H}}(B)\}$.  
	We show the reverse inequality. Since $\langle X, AX \rangle_{\mathbb{H}} = \Big\langle \begin{bmatrix}
	X\\
	0
	\end{bmatrix}, \begin{bmatrix}
	A &0\\
	0& B
	\end{bmatrix} \begin{bmatrix}
	X\\
	0
	\end{bmatrix}\Big\rangle_{\mathbb{H}} $ for every $X \in S_{\mathbb{H}^{n}}$, it implies that ${\mathop{w}}_{\mathbb{H}}(A) \leq {\mathop{w}}_{\mathbb{H}}\big(\begin{bmatrix}
	A &0\\
	0& B
	\end{bmatrix}\big)$. Similarly,   ${\mathop{w}}_{\mathbb{H}}(B) \leq {\mathop{w}}_{\mathbb{H}}\big(\begin{bmatrix}
	A &0\\
	0& B
	\end{bmatrix}\big) $. Therefore,
	\begin{equation*}
	\max\{{\mathop{w}}_{\mathbb{H}}(A),\; {\mathop{w}}_{\mathbb{H}}(B)\} \leq {\mathop{w}}_{\mathbb{H}}\big(\begin{bmatrix}
	A &0\\
	0& B
	\end{bmatrix}\big).  \qedhere
	\end{equation*}
\end{proof}
\begin{note}
	In case of the associated complex matrix $\rchi_{A}$ of $A \in M_{n}(\mathbb{H})$, the better estimate for an upper bound of ${\mathop{w}}_{\mathbb{C}}(\rchi_{A})$  is given in  \cite{Kittaneh} as follows: 
	\begin{equation}\label{Equation: complexgeneralization}
	{\mathop{w}}_{\mathbb{C}}(\rchi_{A}) \leq \frac{1}{2}\Big( \|\rchi_{A}\|+ \|\rchi_{A}^{2}\|^{\frac{1}{2}}\Big). 
	\end{equation}
\end{note}
The same result is true for matrices over quaternions. We prove the following theorem. 
\begin{thm} \label{Theorem: betterestimation}
	If $A \in M_{n}(\mathbb{H})$, then 
	\begin{equation*}
	{\mathop{w}}_{\mathbb{H}}(A) \leq \frac{1}{2}\Big( \|A\|+ \|A^{2}\|^{\frac{1}{2}}\Big).
	\end{equation*}
\end{thm}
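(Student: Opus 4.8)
The plan is to reduce this quaternionic inequality entirely to its complex counterpart via the embedding $\xi\colon A\mapsto\rchi_{A}$, and then read off the result from the bound already recorded in Equation $(\ref{Equation: complexgeneralization})$. The three ingredients I would assemble are: the identity $w_{\mathbb{H}}(A)=w_{\mathbb{C}}(\rchi_{A})$ from Theorem \ref{Theorem: SameNumericalradius}; the norm equality $\|A\|=\|\rchi_{A}\|$ recorded in the preliminaries; and the fact that $\xi$ is a real algebra homomorphism, so that it respects products.

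First I would observe that since $\xi$ is multiplicative, $\rchi_{A^{2}}=\xi(A^{2})=\xi(A)^{2}=\rchi_{A}^{2}$. Combining this with the norm equality applied to $A^{2}$ gives $\|A^{2}\|=\|\rchi_{A^{2}}\|=\|\rchi_{A}^{2}\|$, so in particular $\|\rchi_{A}^{2}\|^{1/2}=\|A^{2}\|^{1/2}$. This is the only point requiring a small remark, and it is immediate from the homomorphism property stated in the preliminaries.

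Next I would invoke the complex estimate of Kittaneh quoted in Equation $(\ref{Equation: complexgeneralization})$, applied to the matrix $\rchi_{A}\in M_{2n}(\mathbb{C})$, and then substitute the two norm identities. Concretely, the chain of (in)equalities I would write is
\begin{equation*}
w_{\mathbb{H}}(A)=w_{\mathbb{C}}(\rchi_{A})\leq\frac{1}{2}\Big(\|\rchi_{A}\|+\|\rchi_{A}^{2}\|^{\frac{1}{2}}\Big)=\frac{1}{2}\Big(\|A\|+\|A^{2}\|^{\frac{1}{2}}\Big),
\end{equation*}
where the first equality is Theorem \ref{Theorem: SameNumericalradius}, the middle inequality is Equation $(\ref{Equation: complexgeneralization})$, and the final equality uses $\|\rchi_{A}\|=\|A\|$ together with $\|\rchi_{A}^{2}\|=\|A^{2}\|$ established in the previous step.

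There is no genuine obstacle here: the entire content of the theorem is the transfer of a known complex inequality through the functor $\xi$, and the only thing one must verify is that $\xi$ carries $A^{2}$ to $\rchi_{A}^{2}$ (so the quadratic term matches) while preserving norms. I would therefore keep the proof to these few lines rather than reproving the complex bound, since that is cited directly from \cite{Kittaneh}.
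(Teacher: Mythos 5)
Your proposal is correct and follows essentially the same route as the paper: both reduce the statement to Kittaneh's complex bound for $\rchi_{A}$ via $w_{\mathbb{H}}(A)=w_{\mathbb{C}}(\rchi_{A})$, the identity $\rchi_{A}^{2}=\rchi_{A^{2}}$, and the norm equality $\|A\|=\|\rchi_{A}\|$. No issues.
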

\begin{proof}
	By Theorem \ref{Theorem: SameNumericalradius} and Equation (\ref{Equation: complexgeneralization}), we have that 
	\begin{align*}
	{\mathop{w}}_{\mathbb{H}}(A) = {\mathop{w}}_{\mathbb{C}}(\rchi_{A}) &\leq \frac{1}{2} \Big( \|\rchi_{A}\|+ \|\rchi_{A}^{2}\|^{\frac{1}{2}}\Big)\\
	&=  \frac{1}{2}\Big( \|\rchi_{A}\|+ \|\rchi_{A^{2}}\|^{\frac{1}{2}}\Big)\\
	&= \frac{1}{2}\Big( \|A\|+ \|A^{2}\|^{\frac{1}{2}}\Big). \qedhere
	\end{align*}
\end{proof}
As a consequence of Theorem \ref{Theorem: betterestimation}, under certain assumptions, we show that the lower and the upper bounds given in  Theorem \ref{Theorem: Equivalent} coinsides with the numerical radius.
\begin{cor}\label{Corollary: final}
	Let $A \in M_{n}(\mathbb{H})$. Then the following assertions hold true:
	\begin{enumerate}
		\item If $A^{2}=0$, then ${\mathop{w}}_{\mathbb{H}}(A) = \frac{1}{2} \|A\|$.
		\item If ${\mathop{w}}_{\mathbb{H}}(A) = \|A\|$, then $\|A\|^{2} = \|A^{2}\|$.
	\end{enumerate}
\end{cor}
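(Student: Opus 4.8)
The plan is to obtain both parts as immediate consequences of the sharpened upper bound of Theorem \ref{Theorem: betterestimation}, played against the elementary lower bound of Theorem \ref{Theorem: Equivalent} and the submultiplicativity of the operator norm on $M_{n}(\mathbb{H})$. The submultiplicativity is not recorded separately, but it follows at once from the identity $\|A\|=\|\rchi_{A}\|$ together with the fact that $\xi$ is an algebra homomorphism: since $\rchi_{A^{2}}=\rchi_{A}^{2}$, one has $\|A^{2}\|=\|\rchi_{A}^{2}\|\leq\|\rchi_{A}\|^{2}=\|A\|^{2}$. I would record this remark first, as it is the only ingredient used below that is not literally a cited theorem.

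For part $(1)$, I would substitute $A^{2}=0$, hence $\|A^{2}\|=0$, into Theorem \ref{Theorem: betterestimation} to obtain ${\mathop{w}}_{\mathbb{H}}(A)\leq\tfrac{1}{2}\|A\|$. The reverse inequality ${\mathop{w}}_{\mathbb{H}}(A)\geq\tfrac{1}{2}\|A\|$ is precisely a rearrangement of the right half of $\|A\|\leq 2{\mathop{w}}_{\mathbb{H}}(A)$ from Theorem \ref{Theorem: Equivalent}. Combining the two gives ${\mathop{w}}_{\mathbb{H}}(A)=\tfrac{1}{2}\|A\|$.

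For part $(2)$, I would feed the hypothesis ${\mathop{w}}_{\mathbb{H}}(A)=\|A\|$ into Theorem \ref{Theorem: betterestimation} to get $\|A\|\leq\tfrac{1}{2}\bigl(\|A\|+\|A^{2}\|^{1/2}\bigr)$, which rearranges to $\|A\|\leq\|A^{2}\|^{1/2}$, that is $\|A\|^{2}\leq\|A^{2}\|$. Pairing this with the submultiplicativity $\|A^{2}\|\leq\|A\|^{2}$ noted above yields $\|A\|^{2}=\|A^{2}\|$.

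There is essentially no obstacle here: the whole content of the corollary is the transport of the classical consequences of Kittaneh's estimate into the quaternionic setting, and that transport has already been carried out in Theorem \ref{Theorem: betterestimation}. The only point deserving a word of care is the submultiplicativity of $\|\cdot\|$ on $M_{n}(\mathbb{H})$, which I would dispatch through the embedding $\xi$ exactly as indicated, rather than leaving it implicit.
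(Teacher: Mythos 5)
Your proposal is correct and follows essentially the same route as the paper: both parts are obtained by combining Theorem \ref{Theorem: betterestimation} with the lower bound $\tfrac{1}{2}\|A\|\leq{\mathop{w}}_{\mathbb{H}}(A)$ from Theorem \ref{Theorem: Equivalent} and the submultiplicativity $\|A^{2}\|\leq\|A\|^{2}$, which the paper simply asserts from $M_{n}(\mathbb{H})$ being a normed algebra while you justify it via the embedding $\xi$ --- a harmless difference.
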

\begin{proof}
	Proof of $(1)$: If $A^{2}=0$, then by Theorem \ref{Theorem: betterestimation}, we have 
	\begin{equation*}
	\frac{1}{2} \|A\| \leq {\mathop{w}}_{\mathbb{H}}(A) \leq \frac{1}{2} \|A\|.
	\end{equation*}
	We get ${\mathop{w}}_{\mathbb{H}}(A) = \frac{1}{2}\|A\|$.

	\noindent Proof of $(2)$: Since $M_{n}(\mathbb{H})$ is a normed algebra with the operator norm, we see that
	\begin{equation*}
	\|A^{2}\| \leq \|A\| \|A\| = \|A\|^{2}.
	\end{equation*}
	Now we prove the reverse inequality. Since ${\mathop{w}}_{\mathbb{H}}(A) = \|A\|$ and by Theorem \ref{Theorem: betterestimation}, it follows that
	\begin{equation*}
	2\|A\| \leq   \|A\| + \|A^{2}\|^{\frac{1}{2}},
	\end{equation*}
	i.e., $\|A\|^{2} \leq \|A^{2}\|$. Therefore, $\|A\|^{2} = \|A^{2}\|$. \qedhere
\end{proof}
Note that the converse of $(1)$ and $(2)$ of Corollary \ref{Corollary: final} is not true for $n >2$. We provide example for each one of them. 

\begin{eg} Converse of $(1)$ of Corollary \ref{Corollary: final} is not true for $n>2$:
	
	Let 
	\begin{equation*}
	A = \begin{bmatrix}
	0 & 1+\sqrt{3}k & 0\\
	0&0&0\\
	0&0&j
	\end{bmatrix}_{3\times 3}.
	\end{equation*}
	We get 
	\begin{equation*}
	A^{\ast}
	A = \begin{bmatrix}
	0 & 0 & 0\\
	1-\sqrt{3}k&0&0\\
	0&0&-j
	\end{bmatrix} 
	\begin{bmatrix}
	0 & 1+\sqrt{3}k & 0\\
	0&0&0\\
	0&0&j
	\end{bmatrix} = \begin{bmatrix}
	0 &0&0\\
	0&4&0\\
	0&0&1
	\end{bmatrix}
	\end{equation*} 
	Then 
	\begin{equation*}
	\|A\| = \sqrt{\|A^{\ast}A\|} = \sqrt{4}=2.
	\end{equation*}
	Let $X: = \begin{bmatrix}
	a\\
	b\\
	c
	\end{bmatrix} \in S_{\mathbb{H}^{3}}$.
	Then
	\begin{align*}
	|\langle X, A X\rangle_{\mathbb{H}}| &= |\;\overline{a} (1+\sqrt{3}k)b + \overline{c}jc\;|\\
	&\leq 2 |\overline{a}b|+ |c|^{2}\\
	& \leq 2 \big(\frac{|a|^{2}+|b|^{2}}{2}\big) + |c|^{2} \;\;  (\text{by Young's inequality})\\
	&=1. 
	\end{align*}
	If we choose $a = b =0$ and $c=1$, then $|\langle X, A X \rangle_{\mathbb{H}}|=1$. This shows that ${\mathop{w}}_{\mathbb{H}}(A) = 1 = \frac{1}{2}\|A\|$, but $A^{2}\neq 0$.
\end{eg}
\begin{eg} Converse of $(2)$ of Corollary \ref{Corollary: final} is not true for $n >2$:
	
	Let
	\begin{equation*}
	A = \begin{bmatrix}
	0&0&0\\
	j&0&0\\
	0&k&0
	\end{bmatrix}_{3\times 3}.
	\end{equation*}
	Then $\|A^{2}\| = \|A\|^{2}=1$, but ${\mathop{w}}_{\mathbb{H}}(A) = \frac{1}{\sqrt{2}} < 1$.
\end{eg}
\begin{rmk}
	If $n =2$, then the converse of $(1)$ and $(2)$ of Corollary \ref{Corollary: final} holds true. The proof follows similar lines as in complex case (see \cite{Kittaneh} and references therein). 
\end{rmk}

\section*{Acknowledgment}
The author is  thankful to the  Department of Atomic Energy (DAE), Government of India for financial support and ISI Bangalore for providing necessary facilities to carry out this work. We thank Prof.  B.V.R. Bhat for valuable suggestions.



\begin{thebibliography}{99}
	\bibitem{Au-Yeung1}Y.-H. Au-Yeung, On the eigenvalues and numerical range of a quaternionic matrix,  World Sci. Publ., River Edge, NJ.
	\bibitem{Au-Yeung2} Y. H. Au-Yeung, On the convexity of numerical range in quaternionic Hilbert spaces, Linear and Multilinear Algebra {\bf 16} (1984).
	\bibitem{Ghiloni} R. Ghiloni, V. Moretti\ and\ A. Perotti, Continuous slice functional calculus in quaternionic Hilbert spaces, Rev. Math. Phys. {\bf 25} (2013)
	\bibitem{GustafsonRao}K. E. Gustafson\ and\ D. K. M. Rao, {\it Numerical range}, Universitext, Springer-Verlag, New York, 1997.
	\bibitem{Jamison} J. E. Jamison, Numerical range and numerical radius in quaternionic Hilbert spaces, Doctoral Dissertation, Univ. of Missouri, 1972.
	\bibitem{Kittaneh} F. Kittaneh, A numerical radius inequality and an estimate for the numerical radius of the Frobenius companion matrix, Studia Math. {\bf 158} (2003), no.~1, 11--17.
	\bibitem{Ramesh} G. Ramesh, On the numerical radius of a quaternionic normal operator, Adv. Oper. Theory {\bf 2} (2017).
	\bibitem{Santhosh} G. Ramesh\ and\ P. Santhosh Kumar, On the polar decomposition of right linear operators in quaternionic Hilbert spaces, J. Math. Phys. {\bf 57} (2016). 
	\bibitem{Thompson} R. C. Thompson, The upper numerical range of a quaternionic matrix is not a complex numerical range, Linear Algebra Appl. {\bf 254} (1997).
	\bibitem{So1} W. So, R. C. Thompson\ and\ F. Z. Zhang, The numerical range of normal matrices with quaternion entries, Linear and Multilinear Algebra {\bf 37} (1994).
	\bibitem{So2}W. So\ and\ R. C. Thompson, Convexity of the upper complex plane part of the numerical range of a quaternionic matrix, Linear and Multilinear Algebra {\bf 41} (1996).
	\bibitem{Zhang} F. Zhang, Quaternions and matrices of quaternions, Linear Algebra Appl. {\bf 251} (1997), 21--57.
\end{thebibliography}
\end{document}